\newtheorem{theorem}{Theorem}[section]
\newtheorem{proposition}[theorem]{Proposition}
\newtheorem{lemma}[theorem]{Lemma}
\newtheorem{corollary}[theorem]{Corollary}
\theoremstyle{definition}
\newtheorem{definition}[theorem]{Definition}
\newtheorem{example}[theorem]{Example}
\newtheorem{remark}[theorem]{Remark}
\newtheorem{assumption}[theorem]{Assumption} %ADDED BY kl
\begin{document}

\keywords{Discrete time Markov chain, Mean-variance, Optimal stopping, Subgame perfect Nash equilibrium, Strotz's consistent planning, Time-inconsistent stopping, Variance.}
\mathclass{Primary 60G40; Secondary 91A25.}

\abbrevauthors{S. Christensen and K. Kristoffer Lindensj\"o}
\abbrevtitle{Time-inconsistent stopping}

\title{Time-inconsistent stopping, myopic adjustment \& equilibrium stability:  with a mean-variance application}

\author{S\"oren Christensen}
\address{Department of Mathematics, Kiel University\\
Ludewig-Meyn-Str. 4, D-24098 Kiel, Germany\\
E-mail: christensen@math.uni-kiel.de}

\author{Kristoffer Lindensj\"o}
\address{Department of Mathematics, Stockholm University\\
SE-106 91 Stockholm, Sweden\\
E-mail: kristoffer.lindensjo@math.su.se}

\maketitlebcp

\begin{abstract}
For a discrete time Markov chain and in line with Strotz' consistent planning we develop a framework for problems of optimal stopping that are time-inconsistent due to the consideration of a non-linear function of an expected reward. We consider pure and mixed stopping strategies and a (subgame perfect Nash) equilibrium. 
We provide different necessary and sufficient equilibrium conditions including a verification theorem.  
Using a fixed point argument we provide equilibrium existence results. We adapt and study the notion of the myopic adjustment process and introduce different kinds of equilibrium stability. 
We show that neither existence nor uniqueness of equilibria should generally be expected. 
The developed theory is applied to a mean-variance problem and a variance problem.  
\end{abstract}

%%%%%%%%%%%%%%%%%%%%%%%%%%%%%%%%%%%%%%%%%%%%%%%%%%%%%%

\section{Introduction} \label{sec:intro}
Consider a stochastic process $X$ on a state space $E$ and the problem of finding a stopping time $\tau$ that maximizes
\begin{align}
\begin{split}
J_{\tau}(x)& := \mathbb{E}_x(f(X_\tau)) + g\left(\mathbb{E}_x(h(X_\tau))\right),  \enskip X_0=x\\
& \mbox{ where } f,h:E\rightarrow \mathbb{R} \mbox{ and } g:\mathbb{R}\rightarrow \mathbb{R}. \label{payoff}
\end{split}
\end{align}
This problem is in general time-inconsistent in the sense that if a stopping rule is optimal for a particular initial value $x$ then it is generally not optimal for $x' \neq x$; the reason being that $g$ may be non-linear. Note that we may formulate both mean-variance and variance stopping problems as special cases of \eqref{payoff}, see Section \ref{examples}.  

The consistent planning approach to  time-inconsistent problems pioneered by Strotz and Selten \cite{selten1965spieltheoretische,selten1975reexamination,strotz} corresponds --- in a stopping problem context --- to viewing \eqref{payoff} from the perspective of a person who decides when to stop $X$ but whose preferences, due to the time-inconsistency, change as $X$ evolves; and therefore \eqref{payoff} is viewed as an intrapersonal non-cooperative stopping game. The approach is formalized by formulating an appropriate mathematical definition of a subgame perfect Nash equilibrium. We refer to e.g. \cite{tomas-disc,christensen2017finding,christensen2018time,lindensjo2017timeinconHJB} for more comprehensive interpretations of the equilibrium approach to time-inconsistent problems.

The present paper is structured as follows. 
In Section \ref{rel-lit} we motivate the study of time-inconsistent stopping by formulating three types of problems that are studied in finance and economics and review some of the related literature. 
In Section \ref{sec:problem} we define mixed and pure stopping strategies and the equilibrium. In Section \ref{time-consistent case} we show that the definition of equilibrium coincides with standard optimality when the problem is time-consistent (i.e. when $g=0$ in \eqref{payoff}).  
In Section \ref{results-sec} we derive several results with necessary and sufficient equilibrium conditions including a verification theorem. 
In Section \ref{fixed-point-sec} we provide a fixed point problem characterization of equilibrium and related equilibrium existence results.  
In Section \ref{sec:myopic} we adapt and study the notion of a myopic adjustment process. We also define and study different notions of equilibrium stability.
In Sections \ref{mean-var-prob} and \ref{var-prob} the developed theory is applied to mean-variance and variance optimization.  
In Section \ref{uniqueness-existence-examples} we show that an equilibrium does not necessarily exist and if it does then it is not necessarily unique.  
In Section \ref{equilibrium-discussion} we discuss the framework of the present paper in relation to the literature.  
The appendix contains some technical results.

\subsection{Time-inconsistency in economics \& related literature} \label{rel-lit}
In order to motivate the study of time-inconsistent stopping problems in general we here present three simple examples which correspond to time-inconsistent problems commonly studied in finance and economics. Similar presentations are contained in 
\cite{christensen2017finding,christensen2018time} while \cite{tomas-disc,lindensjo2017timeinconHJB} present these problems in a regular stochastic control framework. Note that of the three kinds of problems described in this section only mean-variance optimization can directly be studied within the framework of the present paper.
In \cite{christensen2018time} we develop a general framework for the equilibrium approach to time-inconsistent stopping problems of the type in the present paper for a one-dimensional diffusion. 
We remark that time-inconsistent problems can also be studied using the pre-commitment approach and the dynamic optimality approach. 
In the context of the present paper the pre-commitment approach corresponds to maximizing \eqref{payoff} for a particular $x$. The dynamic optimality approach was invented in \cite{pedersen2016optimal,pedersen2013optimal} and corresponds to choosing a strategy that is optimal with respect to all present states.

\textbf{Mean-variance optimization}:  
In a stopping problem context the mean-variance problem can be motivated with the following example. Suppose an investor wants to sell an asset whose price follows a stochastic process $X$. Suppose the investor wants, for any particular $x$, to use a selling strategy, i.e. a stopping time $\tau$, that maximizes
\begin{align} 
\mathbb{E}_x(X_\tau) - \gamma \mbox{Var}_x(X_\tau), \notag
\end{align}
for a fixed parameter $\gamma>0$ corresponding to risk aversion. The interpretation is that the investor wants a large expected payoff but is averse to risk measured in terms of selling price variance. A mean-variance stopping problem is studied in Section \ref{mean-var-prob}. In \cite[Section 4.2]{christensen2018time} a mean-variance stopping problem for a geometric Brownian motion is studied using the equilibrium approach. In \cite{bayraktar2018} a mean-standard deviation and mean-variance stopping problem for a discrete time Markov chain is studied using the equilibrium approach;{
%\color{blue} 
we remark that a main part of \cite{bayraktar2018} considers \emph{liquidation strategies}, whose interpretation, in the context of an asset selling problem, is that the investor may sell the asset over several time periods.} 
 In \cite{pedersen2016optimal} a mean-variance stopping problem for a geometric Brownian motion is studied using a precommitment approach and the dynamic optimality approach. 
We note that there is a large literature on mean-variance optimization especially for regular stochastic control (often corresponding to dynamic asset portfolio selection), see e.g.  
\cite{bensoussan2014time,
bielecki2005continuous,
tomas-mean,
christensen2018time,Czichowsky,
delong2015instantaneous,
he2013optimal,
kronborg2015inconsistent,
landriault2018equilibrium,
li2013optimal,
pedersen2013optimal,schoneborn2015optimal,van2018time,
vigna2014efficiency,
yan2019open,
zeng2016robust}.

\textbf{Endogenous habit formation}: An example of this kind of problem is a version of the asset selling problem introduced above corresponding to
\begin{align} 
\mathbb{E}_x(F(X_\tau,x)),\notag
\end{align}
where $F(\cdot,x)$ is a utility function parametrized by $x$. The interpretation is that the current price of the asset determines the utility function of the investor. In \cite{christensen2017finding} we develop a general framework for the equilibrium approach to time-inconsistent stopping problems of the endogenous habit formation type for a continous time Markov process and in 
\cite[Example 5.8.]{christensen2017finding} an endogenous habit formation asset selling problem is studied. Endogenous habit formation is also studied in e.g. 
\cite{tomas-continFORTH,detemple1992optimal,englezos2009utility,yu2017optimal}.

\textbf{Non-exponential discounting}: An example of this kind of problem is the version of the asset selling problem corresponding to
\begin{align} 
\mathbb{E}_{t,x}(\delta(\tau-t)F(X_\tau)),\notag
\end{align}
where $\delta(\cdot)$ is a non-exponential discounting function (i.e a non-increasing function taking values in $[0,1]$ with $\delta(0)=1$). Non-exponential discounting stopping problems are studied in 
\cite{bayraktar2019notions,
huang2018time,
huang2017optimal,
huang2017optimalDISC}. They can also be studied within the continuous time framework of \cite{christensen2017finding}. Non-exponential discounting is also studied in e.g. \cite{alia2019non,balbus2018markov}.

Now we mention some other related problems studied in the recent literature. A time-inconsistent stopping problem under model ambiguity is studied using the equilibrium approach in \cite{huang2019optimal}. Conditional optimal stopping is studied using the equilibrium approach in \cite{nutz2019onditional}. 
Precommitment and naive strategies for optimal  exit times for gambling are studied in \cite{he2019optimal}. 
Optimal stopping under probability distortion is studied with a precommitment approach in \cite{xu2013optimal}. In \cite{huang2019general} a general framework for naive and equilibrium strategies for time-inconsistent stopping problems for a diffusion is developed and applied to probability distortion. The principle of smooth pasting for a particular problem is considered in \cite{tan2018failure}. \cite{Duraj2017Optimal} studies a framework under a general preference structure.
A version of the classical dividend problem with a time-inconsistent restriction is studied in \cite{christensen2019dividend} using both the precommitment and consistent planning approach.

Further references to the literature are found throughout the paper. A recent survey of time-inconsistent stochastic control is \cite{yan2019time}.

\section{Problem formulation} \label{sec:problem}
We consider the time-inconsistent stopping problem \eqref{payoff} for a discrete time strong time-homogeneous Markov chain $X=\{X_n\}$, $n\in \mathbb{N}_0$, taking values in a finite state space $E$ with $N$ elements. We also consider a stochastic process $\{Y_n\}$, $n\in \mathbb{N}_0$, where each $Y_n$ is uniformly distributed on $[0,1]$ and independent of $X$ and of every $Y_k$, $k\neq n$. We denote by $\mathbb{P}_x$ the measure under which $X_0=x\in E$ a.s. The associated expectations are denoted by $\mathbb{E}_x$. 

As a notational convenience we consider an ordering of the state space $E$ and identify $N$-dimensional vectors, for instance 
$\textbf{p}$, with functions on the state space, i.e.

\[\textbf{p} = \left(p_{x_1},p_{x_2},...,p_{x_{N-1}},p_{x_N}\right)^T 
= \left(p_x\right)_{x\in E}.\]

\begin{definition}[Mixed stopping strategies] \label{def:mix-strat} 
A vector $\textbf{p}\in [0,1]^N$ is said to be a mixed (Markov) stopping strategy and 
\[\tau_\textbf{p}:= \min\{n\geq 0: Y_{n} \leq p_{X_n}\}\]
is said to be a mixed (Markov) strategy (profile) stopping time.
\end{definition}
Note that $\tau_\textbf{p}$ is a stopping time with respect to the filtration $\sigma(X_0,...,X_n,Y_0,...,Y_n)$, $n\in \mathbb{N}_0$.

\begin{definition}[Pure stopping strategies] 
A mixed stopping strategy $\textbf{p}$ is said to be a pure stopping strategy if $\textbf{p} \in \{0,1\}^N$.
\end{definition}

\begin{remark} [Interpretation] 
For a mixed stopping strategy ${{\textbf{p}}}$ and any $n$ the conditional probability of stopping $X$ at $n$, \emph{before} having observed $Y_n$, given that $X$ has not been stopped before $n$, is $p_{X_n}$. In this sense a stopping strategy ${{\textbf{p}}}$ corresponds to using the random variable $Y_n$ as a randomization device for the stopping decision made at $n$.   
Note that the randomization device can be interpreted as flipping a biased coin at each $n$ and stopping at $n$ if the outcome is, say, heads, where the probability of heads is $p_y$ if the observed state is $X_n=y$. 
For a pure stopping strategy the conditional probability of stopping $X$ at $n$ given that $X$ has not been stopped before $n$ is either one or zero; and in this sense the decision to stop or not at $n$ depends only on the payoff relevant quantity $X_n$ without randomization. 
For a more thorough description of the game theory terms used in this section in another time-inconsistent stopping context see \cite{christensen2017finding}. 
\end{remark} 
\begin{remark} Note that $\textbf{p}$ is a complete specification of the strategies of all players in the game and that $\textbf{p}$ is in this sense a strategy profile, although we refer to $\textbf{p}$ as a stopping strategy{ to be more in line with the existing literature}. 
\end{remark} 

Since the distribution of $\tau_\textbf{p}$ is determined by $\textbf{p}$ we typically perform the analysis of the present paper from the viewpoint of stopping strategies $\textbf{p}$. 
Hence, instead of $J_{\tau_\textbf{p}}(x)$ we write $J_\textbf{p}(x)$, cf. \eqref{payoff}.

\begin{definition}[Equilibrium]\label{def:equ_stop_time} A stopping strategy ${\hat {\textbf{p}}}$ is said to be a (subgame perfect Nash) equilibrium if 
\begin{align}
\label{Eq} \tag{EqI} 
\begin{split}
J_{\hat {\textbf{p}}}(x) &\geq qf(x)  + (1-q)\mathbb{E}_x\left(\mathbb{E}_{X_1}(f(X_{\tau_{\hat {\textbf{p}}}})) \right)\\
& + g\left(qh(x) + (1-q)\mathbb{E}_x\left(\mathbb{E}_{X_1}(h(X_{\tau_{\hat {\textbf{p}}}})) \right)\right), 
\mbox{ for all $q \in [0,1]$ and all $x \in E$.} 
\end{split}
\end{align}
The equilibrium is said to be pure if $\hat {\textbf{p}}$ is pure. 
If $\hat {\textbf{p}}$ is an equilibrium then 
$\tau_{\hat {\textbf{p}}}$ is said to be the (corresponding) equilibrium stopping time and 
$J_{\hat {\textbf{p}}}$ %, see \eqref{J-notation}, 
 is said to be the (corresponding) equilibrium value function.
\end{definition}

\begin{remark} [Interpretation]
The interpretation of the expression in the right hand side of \eqref{Eq} --- or equivalently of $K(x,q,{\hat{\textbf{p}}})$, see 
\eqref{K-func} and \eqref{Eq'}--\eqref{Eq'''} below --- is that it is the value obtained at $x$ when stopping at $x$ with probability $q$ given that the strategy ${\hat{\textbf{p}}}$ is used subsequently. The interpretation of the expression in the left hand side of 
\eqref{Eq}-- \eqref{Eq''} % ref updated
 is that it is the value obtained at $x$ when using the strategy  ${\hat{\textbf{p}}}$ given that ${\hat{\textbf{p}}}$  is used subsequently. 
The interpretation of an equilibrium $\hat {\textbf{p}}$ is therefore that, for each $x$, there is the \emph{possibility} to deviate from $\hat {\textbf{p}}$ at $x$ in the sense of using any other biased coin --- cf. $q$ in \eqref{Eq} --- to determine whether to stop $X$ at the present time or not; but that such a deviation is never preferred to using ${\hat {p}}_x$ given that 
${\hat {\textbf{p}}}$ is used at all subsequent dates.
\end{remark}

This paper is devoted to the question of how to find equilibria as defined above. Throughout  the paper we suppose the following assumptions hold: 
\begin{assumption} \label{g-assum}
The function $g:\mathbb{R}\rightarrow \mathbb{R}$ in \eqref{payoff} is continous.
\end{assumption}
\begin{assumption} \label{absorb-assum}
$X$ is an absorbing Markov chain; that is, for each $X_0=x\in E$, there is (at least) one absorbing state in $E$ that $X$ reaches with positive probability in a finite number of steps. 
\end{assumption}
We use the convention
\begin{align}  \notag
\mbox{$f(X_\tau):= \lim_{n\rightarrow\infty}f(X_n)$ and $h(X_\tau):= \lim_{n\rightarrow\infty}h(X_n)$ on $\{\tau = \infty\}$,}
\end{align}
where the limits exist due to Assumption \ref{absorb-assum}, and the notation
\begin{align}  \label{phi-psi-notation}
\phi_\textbf{p}(x):= 
\mathbb{E}_x(f(X_{\tau_\textbf{p}})) \mbox{ and } \psi_\textbf{p}(x):= \mathbb{E}_x(h(X_{\tau_\textbf{p}})),
\end{align}
which we note implies that
\begin{align} \notag 
J_\textbf{p}(x) =  \phi_\textbf{p}(x) + g\left(\psi_\textbf{p}(x)\right).
\end{align}
%\begin{remark} 
	We remark that if $p_x=0$ for each $x\in E$ then $\tau_\textbf{p}=\infty$ a.s. meaning that $X$ is never stopped. However, in this case $X$ eventually reaches an absorbing state by Assumption \ref{absorb-assum} and therefore stops in this sense. We remark the related fact that $\phi_\textbf{p}$ and $\psi_\textbf{p}$ are independent of the value of $p_x$ whenever $x$ is an absorbing state. 
%
%\end{remark} 
%}

We also use the notation
\begin{align}
\label{K-func}
\begin{split}
&K(x,q,{\textbf{p}})\\
&\enskip :=
qf(x)  + (1-q)\mathbb{E}_x\left(\phi_{\textbf{p}}(X_1)\right) + g\left(qh(x) + (1-q)\mathbb{E}_x\left( \psi_{\textbf{p}} (X_1) \right)\right).
\end{split}
\end{align} 
We now provide three equivalent equilibrium definitions that will be used in the sequel. 
\begin{proposition} \label{equivalent-eq-def} 
Each one of the following conditions is equivalent to the equilibrium condition \eqref{Eq}:
\begin{align}
\phi_{\hat {\textbf{p}}}(x)  + g\left(\psi_{\hat {\textbf{p}}}(x)\right)  &\geq K(x,q,{\hat {\textbf{p}}}), \mbox{ for all $q \in [0,1]$ and all $x \in E$.} \label{Eq'} \tag{EqII}\\
\phi_{\hat {\textbf{p}}}(x)  + g\left(\psi_{\hat {\textbf{p}}}(x)\right)&= \max_{q\in [0,1]}K(x,q,{\hat {\textbf{p}}}), \mbox{ for all $x \in E$.}\label{Eq''} \tag{EqIII}\\
\hat {p}_x & \in \mbox{\normalfont arg}\hspace{-1mm}\max_{q \in [0,1]} K(x,q,{\hat {\textbf{p}}}), \mbox{ for all $x \in E$.} 	
\label{Eq'''} \tag{EqIV} 
\end{align}
\end{proposition}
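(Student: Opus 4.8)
The plan is to prove the chain of equivalences $\eqref{Eq} \Leftrightarrow \eqref{Eq'} \Leftrightarrow \eqref{Eq''} \Leftrightarrow \eqref{Eq'''}$. The equivalence of $\eqref{Eq}$ and $\eqref{Eq'}$ is essentially bookkeeping: using $J_\textbf{p}(x) = \phi_\textbf{p}(x) + g(\psi_\textbf{p}(x))$ and the tower property $\mathbb{E}_x(\mathbb{E}_{X_1}(f(X_{\tau_{\hat{\textbf p}}}))) = \mathbb{E}_x(\phi_{\hat{\textbf p}}(X_1))$ (and likewise for $h$ and $\psi$), the right-hand side of $\eqref{Eq}$ is exactly $K(x,q,{\hat{\textbf p}})$ as defined in $\eqref{K-func}$, so the two inequalities coincide term by term for every $q$ and $x$. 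I would spell out the tower-property identity briefly since it is the only substantive point here, noting that $\mathbb{E}_{X_1}(f(X_{\tau_{\hat{\textbf p}}}))$ should be read as $\phi_{\hat{\textbf p}}(X_1)$ by the strong Markov property and time-homogeneity, and that the limiting convention on $\{\tau=\infty\}$ together with Assumption~\ref{absorb-assum} guarantees all these expectations are well defined.

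For $\eqref{Eq'} \Leftrightarrow \eqref{Eq''}$: the inequality $\eqref{Eq'}$ says $\phi_{\hat{\textbf p}}(x) + g(\psi_{\hat{\textbf p}}(x)) \geq K(x,q,{\hat{\textbf p}})$ for all $q\in[0,1]$, i.e. the left-hand side is an upper bound for $\{K(x,q,{\hat{\textbf p}}):q\in[0,1]\}$. To upgrade this to the $\max$ statement $\eqref{Eq''}$ I need the bound to be attained, and the natural candidate is $q = \hat p_x$. The key computation is that $K(x,\hat p_x,{\hat{\textbf p}}) = \phi_{\hat{\textbf p}}(x) + g(\psi_{\hat{\textbf p}}(x))$: conditioning on the first step under the strategy $\hat{\textbf p}$, with probability $\hat p_x$ we stop immediately at $x$ (contributing $\hat p_x f(x)$ to $\phi$ and $\hat p_x h(x)$ inside $g$) and with probability $1-\hat p_x$ we continue, after which the value is $\mathbb{E}_x(\phi_{\hat{\textbf p}}(X_1))$ and $\mathbb{E}_x(\psi_{\hat{\textbf p}}(X_1))$ respectively; this is precisely a one-step decomposition of $\phi_{\hat{\textbf p}}(x)$ and $\psi_{\hat{\textbf p}}(x)$. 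Continuity of $g$ (Assumption~\ref{g-assum}) ensures the $\max$ over the compact set $[0,1]$ exists, so $\eqref{Eq'}$ together with attainment at $\hat p_x$ gives $\eqref{Eq''}$; conversely $\eqref{Eq''}$ trivially implies $\eqref{Eq'}$.

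Finally, $\eqref{Eq''} \Leftrightarrow \eqref{Eq'''}$ follows immediately from the same one-step identity $K(x,\hat p_x,{\hat{\textbf p}}) = \phi_{\hat{\textbf p}}(x) + g(\psi_{\hat{\textbf p}}(x))$: condition $\eqref{Eq'''}$ says $K(x,\hat p_x,{\hat{\textbf p}}) = \max_{q}K(x,q,{\hat{\textbf p}})$ for all $x$, and by the identity the left side equals $\phi_{\hat{\textbf p}}(x)+g(\psi_{\hat{\textbf p}}(x))$, which is exactly $\eqref{Eq''}$. The main obstacle — really the only point requiring care — is establishing the one-step decomposition $K(x,\hat p_x,{\hat{\textbf p}}) = \phi_{\hat{\textbf p}}(x) + g(\psi_{\hat{\textbf p}}(x))$ rigorously, i.e. justifying the conditioning argument for $\phi_{\hat{\textbf p}}$ and $\psi_{\hat{\textbf p}}$ using the structure of $\tau_{\hat{\textbf p}}$ in Definition~\ref{def:mix-strat} (the independence of $Y_0$ from $X$, and the strong Markov property restarting the chain at $X_1$); once this identity is in hand, everything else is a short logical rearrangement. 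I would state it as a small displayed lemma-within-the-proof and then close the loop of equivalences.
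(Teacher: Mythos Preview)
Your proposal is correct and follows essentially the same route as the paper: the equivalence of \eqref{Eq} and \eqref{Eq'} is pure notation (the right-hand side of \eqref{Eq} is by definition $K(x,q,\hat{\textbf p})$), and the remaining equivalences hinge on the single identity $K(x,\hat p_x,\hat{\textbf p}) = \phi_{\hat{\textbf p}}(x) + g(\psi_{\hat{\textbf p}}(x))$, i.e.\ the one-step decomposition $\phi_{\textbf p}(x) = p_x f(x) + (1-p_x)\mathbb{E}_x(\phi_{\textbf p}(X_1))$ (and likewise for $\psi$). The paper states this decomposition separately as Lemma~\ref{phi-p-expressed-with-Exp} in the appendix and simply cites it, whereas you propose to prove it inline; either way the argument is the same.
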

\begin{proof}
Use the notation \eqref{phi-psi-notation} and \eqref{K-func} to see that the first result holds. The second and third results follow from the first result and the observation that if we set $q=\hat {p}_x$ in \eqref{Eq'} then equality is attained, cf. Lemma \ref{phi-p-expressed-with-Exp}. 
\end{proof}
 
\begin{remark}{
%\color{blue} 
It is possible to slightly relax Assumptions \ref{g-assum} and \ref{absorb-assum} at the cost of increasing the amount of technical details. In particular, the discussion of the case $\tau_{\mathbf p}=\infty$ is getting more difficult without absorbing states. Here, a careful definition of limits of the form $\lim_{n\rightarrow\infty}f(X_n)$ is necessary. We have chosen not to include this discussion in order to focus on the main ideas and not overburden the presentation. \\
However, introducing a discount factor solves this issue. This is directly possible in the framework introduced above. Indeed, if we start with a general Markov chain $X$ on\ $E$ with possibly no absorbing states, we consider the associated geometrically killed Markov chain $\tilde X$ on $E\cup\{\Delta\}$ with killing rate $q\in(0,1)$, where $\Delta$ is an isolated point such that all rewards are 0 in $\Delta$. Then, $\tilde X$ fulfills Assumption \ref{absorb-assum} and, when we assume that $g(0)=0$ w.l.o.g.,
\begin{align} \notag
\begin{split}
\tilde J_{\tau}(x)&:= \mathbb{E}_x(f(\tilde X_\tau)) + g\left(\mathbb{E}_x(h(\tilde X_\tau))\right)\\
& =\mathbb{E}_x(q^\tau f(X_\tau)) + g\left(\mathbb{E}_x(q^\tau h(X_\tau))\right).
\end{split}
\end{align}
The choice to consider a finite state space has been made in order to not overburden the paper with technical details; in particular, we expect it to be possible to consider a countably infinite state space, at the cost of increasing the amount of technical details, regarding e.g. how Assumption \ref{absorb-assum} should in this case be formulated.}
\end{remark}

\subsection{The time-consistent case} \label{time-consistent case}
If we consider a standard stopping problem, corresponding to maximization for \eqref{payoff} with $g=0$, then an equilibrium stopping time has the desirable property of being characterized as an --- in the usual sense --- \emph{optimal} stopping time:

\begin{theorem} \label{time-con-thm} Suppose $g=0$. Then, $\tau_{\hat {\textbf{p}}}$ is an equilibrium stopping time for \eqref{payoff} if and only if $\tau_{\hat {\textbf{p}}}$ is an optimal stopping time for \eqref{payoff}.
\end{theorem}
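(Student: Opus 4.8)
The plan is to exploit the equivalent equilibrium characterization \eqref{Eq'''}, which says that $\hat{\textbf{p}}$ is an equilibrium precisely when $\hat{p}_x \in \operatorname{arg\,max}_{q\in[0,1]} K(x,q,\hat{\textbf{p}})$ for every $x$. When $g=0$ the function $K$ simplifies to $K(x,q,\textbf{p}) = qf(x) + (1-q)\mathbb{E}_x(\phi_{\textbf{p}}(X_1))$, which is \emph{affine} in $q$; hence its maximum over $[0,1]$ is attained at an endpoint, and more precisely $q=1$ is a maximizer iff $f(x)\geq \mathbb{E}_x(\phi_{\textbf{p}}(X_1))$ while $q=0$ is a maximizer iff the reverse inequality holds. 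So an equilibrium is exactly a strategy $\hat{\textbf{p}}$ (which we may take to be pure) whose stopping region $\{x: \hat{p}_x=1\}$ consists of states where $f(x)\geq \mathbb{E}_x(\phi_{\hat{\textbf{p}}}(X_1))$ and whose continuation region consists of states where $f(x)\leq \mathbb{E}_x(\phi_{\hat{\textbf{p}}}(X_1))$. This is the classical dynamic-programming characterization of an optimal stopping rule, so the two notions should coincide.

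For the direction ``optimal $\Rightarrow$ equilibrium'': if $\tau_{\hat{\textbf{p}}}$ is optimal then $\phi_{\hat{\textbf{p}}}(x) = \sup_\tau \mathbb{E}_x(f(X_\tau)) =: V(x)$ is the value function of the (standard) optimal stopping problem, which satisfies the dynamic programming equation $V(x) = \max\{f(x), \mathbb{E}_x(V(X_1))\}$ and $\tau_{\hat{\textbf{p}}} = \inf\{n: f(X_n) = V(X_n)\}$ (the first entry into the stopping region), at least for the minimal/maximal optimal rule. From $V(x)=\max\{f(x),\mathbb{E}_x(V(X_1))\}$ and $V=\phi_{\hat{\textbf{p}}}$ one reads off that $\max_{q\in[0,1]} K(x,q,\hat{\textbf{p}}) = \max\{f(x), \mathbb{E}_x(\phi_{\hat{\textbf{p}}}(X_1))\} = V(x) = \phi_{\hat{\textbf{p}}}(x)$, which is exactly \eqref{Eq''} with $g=0$, hence \eqref{Eq}. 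For the converse ``equilibrium $\Rightarrow$ optimal'': given \eqref{Eq''}, the function $v:=\phi_{\hat{\textbf{p}}}$ satisfies $v(x) = \max\{f(x), \mathbb{E}_x(v(X_1))\}$, so $v$ is a solution of the DP equation; one then argues $v$ is the smallest superharmonic majorant of $f$ (or directly that $v=V$) by a standard iteration/verification argument using Assumption \ref{absorb-assum} to control the behaviour on $\{\tau=\infty\}$, and that $\tau_{\hat{\textbf{p}}}$ attains the supremum. Lemma \ref{phi-p-expressed-with-Exp} (the relation $\phi_{\hat{\textbf{p}}}(x) = \hat{p}_x f(x) + (1-\hat{p}_x)\mathbb{E}_x(\phi_{\hat{\textbf{p}}}(X_1))$, referenced in the proof of Proposition \ref{equivalent-eq-def}) is the key identity tying $\phi_{\hat{\textbf{p}}}$ to the one-step operator.

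The main obstacle I anticipate is not the affine/endpoint observation but the careful handling of the event $\{\tau_{\hat{\textbf{p}}} = \infty\}$ together with the limiting convention $f(X_\tau) := \lim_n f(X_n)$: one must check that the DP equation $v = \max\{f, \mathbb{E}_\cdot(v(X_1))\}$ together with absorbingness (Assumption \ref{absorb-assum}) really does force $v$ to equal the true value function $V$ — in particular that there is no ``spurious'' solution of the DP equation arising from never stopping, and that the equilibrium rule's stopping time is genuinely optimal rather than merely superharmonic-consistent. The resolution is to use that on $\{\tau_{\hat{\textbf{p}}}=\infty\}$ the chain is absorbed, so $\phi_{\hat{\textbf{p}}}(x)$ coincides with the payoff of the rule that also stops at absorbing states, and to run the standard backward-induction/Snell-envelope comparison; since $E$ is finite this iteration converges in finitely many effective steps modulo the absorbing behaviour. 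A secondary minor point is passing between pure and mixed strategies — one notes that on the continuation and stopping regions the value of $\hat p_x$ is immaterial except possibly on the boundary set $\{x: f(x)=\mathbb{E}_x(\phi_{\hat{\textbf{p}}}(X_1))\}$, where any $q$ is a maximizer, so existence of an equilibrium stopping \emph{time} is equivalent to existence of an optimal one.
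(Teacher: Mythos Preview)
Your proposal is correct and follows essentially the same path as the paper: reduce the equilibrium condition with $g=0$ to the dynamic programming relation $J_{\hat{\textbf{p}}}(x)=\max\{f(x),\mathbb{E}_x(J_{\hat{\textbf{p}}}(X_1))\}$ and then invoke the standard optimal stopping characterization. The paper works directly from \eqref{Eq} rather than \eqref{Eq'''}, but this is immaterial.

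The one place where the paper is cleaner is the direction ``equilibrium $\Rightarrow$ optimal'', precisely where you flag your main obstacle. You propose to argue uniqueness of solutions to the DP equation via backward induction and a careful treatment of $\{\tau_{\hat{\textbf{p}}}=\infty\}$. The paper sidesteps this entirely with a sandwich argument: from the equilibrium inequality one reads off that $J_{\hat{\textbf{p}}}$ is an excessive majorant of $f$, hence $J_{\hat{\textbf{p}}}\geq V$ by the smallest-excessive-majorant characterization of the value function; on the other hand $J_{\hat{\textbf{p}}}(x)=\mathbb{E}_x(f(X_{\tau_{\hat{\textbf{p}}}}))\leq V(x)$ trivially, since $\tau_{\hat{\textbf{p}}}$ is one admissible stopping time. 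Thus $J_{\hat{\textbf{p}}}=V$ and $\tau_{\hat{\textbf{p}}}$ is optimal, with no need to discuss uniqueness of DP solutions or the behaviour on $\{\tau=\infty\}$ separately. You already have all the ingredients for this shortcut in your write-up; it just removes the part you were worried about.
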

\begin{proof}
$g=0$ implies that the equilibrium condition \eqref{Eq} can be written as
\begin{align} \notag
J_{\hat {\textbf{p}}}(x)  \geq qf(x) + (1-q)\mathbb{E}_x\left(  J_{\hat {\textbf{p}}}(X_1) \right), \mbox{ for all $q \in [0,1]$ and all $x \in E$,}
\end{align}
or equivalently as
\begin{align} \notag
J_{\hat {\textbf{p}}}(x)  \geq \max\{f(x),\mathbb{E}_x\left(  J_{\hat {\textbf{p}}}(X_1) \right)\}, \mbox{ for all $x \in E$.}
\end{align}
We find that $J_{\hat {\textbf{p}}}(x)$ is an equilibrium value function if and only if
(i) $J_{\hat {\textbf{p}}}(x)$ is excessive for $X$, 
(ii)  $J_{\hat {\textbf{p}}}(x)$ majorizes $f(x)$, and
(iii)  $J_{\hat {\textbf{p}}}(x)=\mathbb{E}_x(f(X_{\tau_{\hat {\textbf{p}}}}))$; i.e. if and only if $J_{\hat {\textbf{p}}}(x)$ is the optimal value function of the problem \eqref{payoff} with $g=0$ (by well-known results from the general theory of optimal stopping, see e.g. \cite{MR2374974}).
\end{proof}

\section{Necessary and sufficient equilibrium conditions} \label{results-sec}

It is instructive to note that the right hand side of the equality in \eqref{Eq''}, or equivalently \eqref{Eq'''}, is for each fixed $x$ and $\hat {\textbf{p}}$ an elementary optimization problem of a function $[0,1]\rightarrow\mathbb{R}$, cf. \eqref{K-func}; which in particular can be written as 
\begin{align} \notag
\max_{q\in [0,1]}  \{qc_1  + (1-q)c_2 + g\left(qc_3 + (1-q)c_4\right)\},
\end{align}
where $c_1,...,c_4$ are constants (depending on $x$ and $\hat {\textbf{p}}$). Using this observation we immediately obtain:

\begin{theorem} \label{lem:nec-cond}  \enskip
\begin{itemize}
\item Suppose $g$ in    \eqref{payoff} is differentiable. Then, a necessary  condition for a stopping strategy ${\hat {\textbf{p}}}$ to be an equilibrium is that, for each $x \in E$, the following inequalities hold  and (at least) one of them holds with equality:

\begin{align}	
\phi_{\hat {\textbf{p}}}(x) + g\left(\psi_{\hat {\textbf{p}}}(x)\right) & \geq f(x) + g(h(x))  \label{I} %\tag{I}  
\\
\phi_{\hat {\textbf{p}}}(x) + g\left(\psi_{\hat {\textbf{p}}}(x)\right)& 
\geq \mathbb{E}_x\left(   \phi_{\hat {\textbf{p}}}(X_1)  \right) + g\left(\mathbb{E}_x\left( \psi_{\hat {\textbf{p}}}(X_1) \right)\right) \label{II}  
\\
 \label{III}  
\begin{split}
| f(x)  - \mathbb{E}_x\left(  \phi_{\hat {\textbf{p}}}(X_1)  \right) &+ g'\left(\hat {p}_xh(x) + 
(1-\hat {p}_x)\mathbb{E}_x\left( \psi_{\hat {\textbf{p}}}(X_1)  \right)\right)\\ 
&\times \left( h(x)  - \mathbb{E}_x\left( \psi_{\hat {\textbf{p}}}(X_1)  \right) \right)| \geq  0.
\end{split}
\end{align}
\item  Suppose $g$  in \eqref{payoff} is twice differentiable. Then, a necessary  condition for a stopping strategy ${\hat {\textbf{p}}}$ to be an equilibrium is that it % added word 
for each $x\in E$ with $\hat {p}_x \in (0,1)$ (if such points exist) holds that,
\begin{align}	\notag
g''\left(\hat {p}_xh(x) + (1-\hat {p}_x)\mathbb{E}_x\left( \psi_{\hat {\textbf{p}}}(X_1)  \right)\right)\leq 0.
\end{align}	 
\end{itemize}
\end{theorem}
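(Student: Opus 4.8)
The plan is to reduce everything to elementary one-variable calculus via Proposition \ref{equivalent-eq-def}. By that proposition, \eqref{Eq'''} holds, so a strategy $\hat{\textbf{p}}$ is an equilibrium exactly when, for every $x\in E$, the point $\hat p_x$ is a maximizer of $q\mapsto K(x,q,\hat{\textbf{p}})$ over $[0,1]$; moreover, as in the proof of that proposition (via Lemma \ref{phi-p-expressed-with-Exp}), its value at $q=\hat p_x$ is $K(x,\hat p_x,\hat{\textbf{p}})=\phi_{\hat{\textbf{p}}}(x)+g(\psi_{\hat{\textbf{p}}}(x))$. Fix $x$ and abbreviate $c_1:=f(x)$, $c_2:=\mathbb{E}_x(\phi_{\hat{\textbf{p}}}(X_1))$, $c_3:=h(x)$, $c_4:=\mathbb{E}_x(\psi_{\hat{\textbf{p}}}(X_1))$, so that $F(q):=K(x,q,\hat{\textbf{p}})=qc_1+(1-q)c_2+g\bigl(qc_3+(1-q)c_4\bigr)$, with $F(\hat p_x)=\phi_{\hat{\textbf{p}}}(x)+g(\psi_{\hat{\textbf{p}}}(x))$, $F(1)=f(x)+g(h(x))$ and $F(0)=\mathbb{E}_x(\phi_{\hat{\textbf{p}}}(X_1))+g(\mathbb{E}_x(\psi_{\hat{\textbf{p}}}(X_1)))$. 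The task is then just to write down the optimality conditions satisfied by the maximizer $\hat p_x$ of $F$ on $[0,1]$, exactly as anticipated in the paragraph preceding the statement.

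For the first bullet, $g$ differentiable makes $F$ differentiable on $[0,1]$ with $F'(q)=(c_1-c_2)+g'\bigl(qc_3+(1-q)c_4\bigr)(c_3-c_4)$. Since $\hat p_x$ is a global maximizer, $F(\hat p_x)\ge F(1)$ and $F(\hat p_x)\ge F(0)$, which upon inserting the three values above are precisely the inequalities \eqref{I} and \eqref{II}; and \eqref{III} is the trivial bound $|F'(\hat p_x)|\ge 0$. For the claim that one of \eqref{I}--\eqref{III} holds with equality I would split into cases: if $\hat p_x=1$ then $F(\hat p_x)=F(1)$, so \eqref{I} is an equality; if $\hat p_x=0$ then $F(\hat p_x)=F(0)$, so \eqref{II} is an equality; and if $\hat p_x\in(0,1)$ then $\hat p_x$ is an interior maximizer of the differentiable $F$, so Fermat's rule gives $F'(\hat p_x)=0$, i.e. \eqref{III} with equality.

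For the second bullet, $g$ twice differentiable makes $F$ of class $C^2$ with $F''(q)=g''\bigl(qc_3+(1-q)c_4\bigr)(c_3-c_4)^2$. At an interior maximizer $\hat p_x\in(0,1)$ the standard second-order necessary condition gives $F''(\hat p_x)\le 0$, that is $g''\bigl(\hat p_x h(x)+(1-\hat p_x)\mathbb{E}_x(\psi_{\hat{\textbf{p}}}(X_1))\bigr)\bigl(h(x)-\mathbb{E}_x(\psi_{\hat{\textbf{p}}}(X_1))\bigr)^2\le 0$; when $h(x)\neq \mathbb{E}_x(\psi_{\hat{\textbf{p}}}(X_1))$ one divides by the positive square to obtain the asserted $g''(\cdot)\le 0$.

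I do not expect a real obstacle: the whole content is the reduction, supplied by Proposition \ref{equivalent-eq-def}, to the scalar maximization $\max_{q\in[0,1]}\{qc_1+(1-q)c_2+g(qc_3+(1-q)c_4)\}$, after which the first-order conditions (boundary versus interior) and the second-order condition finish the proof. The only points needing a little care are the bookkeeping in the first bullet of which inequality becomes an equality in the three cases $\hat p_x=0$, $\hat p_x=1$, $\hat p_x\in(0,1)$, and, in the second bullet, the degenerate subcase $h(x)=\mathbb{E}_x(\psi_{\hat{\textbf{p}}}(X_1))$, where $F$ is affine and the square $(c_3-c_4)^2$ vanishes; there the argument of $g''$ equals $\psi_{\hat{\textbf{p}}}(x)$ and the inequality should be read with a mild non-degeneracy proviso in force.
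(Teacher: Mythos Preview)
Your proposal is correct and follows essentially the same approach as the paper: reduce via Proposition~\ref{equivalent-eq-def} to the scalar maximization of $q\mapsto K(x,q,\hat{\textbf{p}})$ on $[0,1]$, obtain \eqref{I} and \eqref{II} by evaluating at $q=1$ and $q=0$, note \eqref{III} is trivial, and then use the first- and second-order optimality conditions at $\hat p_x$. Your explicit case split $\hat p_x\in\{0\},\{1\},(0,1)$ makes the ``one holds with equality'' claim more transparent than the paper's terse argument, and you are right to flag the degenerate subcase $h(x)=\mathbb{E}_x(\psi_{\hat{\textbf{p}}}(X_1))$ in the second bullet, which the paper does not address.
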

\begin{proof} Inequalities \eqref{I} and \eqref{II} are obtained by setting $q=1$ and $q=0$ in \eqref{Eq'}, respectively. Inequality \eqref{III} is trivial. If none of \eqref{I}--\eqref{III} holds with equality then  $\hat {p} _x$ cannot satisfy \eqref{Eq'''}; to see this use e.g. that \eqref{III} is essentially a first order condition for the maximization in \eqref{Eq'''}. Hence, the first result holds. The second result is proved similarly; it is essentially a second order condition for the maximization in \eqref{Eq'''}. 
\end{proof}

We now provide an equilibrium verification theorem.

\begin{definition} \label{characterizing-equation-def} 
Two functions $\phi:E\rightarrow \mathbb{R}$ and $\psi:E\rightarrow \mathbb{R}$ are said to be a solution to the \emph{characterizing equation} if, for all $x\in E$, 
\begin{align}	
 &\phi(x) + g\left(\psi(x)\right)\notag\\
 &\quad = \max_{q\in [0,1]}\left\{qf(x)  + (1-q)\mathbb{E}_x\left(\phi(X_1)\right) + g\left(qh(x) + (1-q)\mathbb{E}_x\left( \psi (X_1) \right)\right)\right\}, 
\label{characterizing-equation-def:1}\\
&\phi(x) =  \hat q_xf(x)  + (1-\hat q_x)\mathbb{E}_x\left(\phi(X_1)\right), \mbox{ and }\notag \\
&\psi(x)  = \hat q_xh(x)  + (1-\hat q_x)\mathbb{E}_x\left(\psi(X_1)\right) \notag\\
& \mbox{ where $\hat q_x$ is
the maximal 
constant in the set of maximizers in \eqref{characterizing-equation-def:1}.}
\label{characterizing-equation-def:4}
\end{align}
\end{definition}

\begin{theorem}[Verification] \label{ver-thm} Suppose two functions $\phi$ and $\psi$ constitute a solution to the characterizing equation. 
Then, any vector $\hat {\textbf{q}}=\left({\hat {q}_x}\right)_{x\in E}$, with ${\hat {q}}_x$ defined in \eqref{characterizing-equation-def:4},  
is an equilibrium whose equilibrium value function is given by
\begin{align}	\notag
J_{\hat {\textbf{q}}}(x) = \phi(x) + g\left(\psi(x)\right).
\end{align}
 
\end{theorem}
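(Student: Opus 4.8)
The plan is to verify the equilibrium condition \eqref{Eq'''} directly, and then read off the value function. Let $\phi,\psi$ be a solution of the characterizing equation and let $\hat{\textbf{q}}=(\hat q_x)_{x\in E}$ be the associated vector. The first step is to show that $\phi=\phi_{\hat{\textbf{q}}}$ and $\psi=\psi_{\hat{\textbf{q}}}$, i.e. that the functions appearing in the characterizing equation are the true expected-reward functions of the strategy $\hat{\textbf{q}}$. This is the crux of the argument. The last two lines of Definition \ref{characterizing-equation-def} say precisely that $\phi$ and $\psi$ satisfy the one-step (fixed-point) relations that $\phi_{\hat{\textbf{q}}}$ and $\psi_{\hat{\textbf{q}}}$ must satisfy: conditioning on the first step, $\phi_{\hat{\textbf{q}}}(x)=\hat q_x f(x)+(1-\hat q_x)\mathbb{E}_x(\phi_{\hat{\textbf{q}}}(X_1))$, and likewise for $\psi_{\hat{\textbf{q}}}$. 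So $\phi-\phi_{\hat{\textbf{q}}}$ (and $\psi-\psi_{\hat{\textbf{q}}}$) is a bounded function that is harmonic for the sub-Markovian kernel obtained by killing $X$ at rate $\hat q_x$ in state $x$; by Assumption \ref{absorb-assum}, from every state $X$ reaches an absorbing state $a$ (where the fixed-point relation forces the value to equal $f(a)+\cdots$, since $\phi$ and $\psi$ are unaffected by $\hat q_a$ there, as noted after \eqref{phi-psi-notation}) in finitely many steps with positive probability, which kills the harmonic difference and forces $\phi=\phi_{\hat{\textbf{q}}}$, $\psi=\psi_{\hat{\textbf{q}}}$. I would make this rigorous either by a direct uniqueness argument for the killed chain, or by iterating the one-step relation $n$ times and letting $n\to\infty$, using boundedness of $f,h$ on the finite state space $E$ and the fact that $\mathbb{P}_x(\tau_{\hat{\textbf{q}}}>n)\to 0$ off the absorbing-never-stop situation (and on that event the convention $f(X_\tau)=\lim f(X_n)$ applies).

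Once $\phi=\phi_{\hat{\textbf{q}}}$ and $\psi=\psi_{\hat{\textbf{q}}}$ are established, the right-hand side of \eqref{characterizing-equation-def:1} is exactly $\max_{q\in[0,1]}K(x,q,\hat{\textbf{q}})$ by the definition \eqref{K-func} of $K$, and the left-hand side is $\phi_{\hat{\textbf{q}}}(x)+g(\psi_{\hat{\textbf{q}}}(x))=J_{\hat{\textbf{q}}}(x)$. Thus \eqref{characterizing-equation-def:1} becomes $J_{\hat{\textbf{q}}}(x)=\max_{q\in[0,1]}K(x,q,\hat{\textbf{q}})$, which is condition \eqref{Eq''}; and since $\hat q_x$ was chosen as (the maximal element of) the set of maximizers in \eqref{characterizing-equation-def:1}, we also have $\hat q_x\in\operatorname*{arg\,max}_{q\in[0,1]}K(x,q,\hat{\textbf{q}})$, which is \eqref{Eq'''}. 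By Proposition \ref{equivalent-eq-def} either of these is equivalent to the equilibrium condition \eqref{Eq}, so $\hat{\textbf{q}}$ is an equilibrium. The formula $J_{\hat{\textbf{q}}}(x)=\phi(x)+g(\psi(x))$ then follows immediately from $\phi=\phi_{\hat{\textbf{q}}}$, $\psi=\psi_{\hat{\textbf{q}}}$ and $J_{\hat{\textbf{q}}}=\phi_{\hat{\textbf{q}}}+g(\psi_{\hat{\textbf{q}}})$.

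I expect the main obstacle to be the first step — justifying that the abstract solution $(\phi,\psi)$ of the characterizing equation coincides with the genuine reward functions $(\phi_{\hat{\textbf{q}}},\psi_{\hat{\textbf{q}}})$ of the induced strategy. This is where Assumption \ref{absorb-assum} is essential: without it, the sub-Markov kernel with killing rates $\hat q_x$ could admit nonzero bounded harmonic functions, and uniqueness would fail. A clean way to present it is to note that the strategy $\hat{\textbf{q}}$ together with $X$ defines a stopping time $\tau_{\hat{\textbf{q}}}$ that is a.s. finite or reaches an absorbing state, iterate the two one-step identities in Definition \ref{characterizing-equation-def} along the sample path up to time $n\wedge\tau_{\hat{\textbf{q}}}$, and pass to the limit using dominated convergence (the rewards are bounded since $E$ is finite). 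Everything after that is bookkeeping with the equivalent equilibrium formulations from Proposition \ref{equivalent-eq-def}.
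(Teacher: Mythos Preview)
Your proposal is correct and takes essentially the same approach as the paper: the paper packages your identification step $\phi=\phi_{\hat{\textbf{q}}}$, $\psi=\psi_{\hat{\textbf{q}}}$ into Lemma~\ref{lem-phi} (whose proof is the iteration argument you sketch), after first observing that at an absorbing state the maximand in \eqref{characterizing-equation-def:1} is constant in $q$, so that $\hat q_x=1$ there and the hypothesis $p_x>0$ of that lemma is met. The equilibrium conclusion and the value-function formula then follow from Proposition~\ref{equivalent-eq-def} exactly as you outline.
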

\begin{proof} 
Note that if $x$ is an absorbing state then the expression to be maximized in \eqref{characterizing-equation-def:1} is independent of $q$ and hence $\hat q_x=1$. The result follows directy from Lemma \ref{lem-phi} and Proposition \ref{equivalent-eq-def}. 
 \end{proof}

In the rest of this section we suppose $g$ in \eqref{payoff} is either convex or concave. We remark that the mean-variance problem studied in Section \ref{mean-var-prob} uses a convex $g$ while the variance problem studied in Section \ref {var-prob} uses a concave $g$.

\begin{corollary} \label{cor:NecSufCondConvex} Suppose $g$ in \eqref{payoff} is convex. Then, a stopping strategy ${\hat {\textbf{p}}}$ is an equilibrium if and only if, for each $x\in E$, \eqref{I} and \eqref{II} hold and (at least) one of them holds with equality. 
\end{corollary}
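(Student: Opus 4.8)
The plan is to exploit the structure of the one-dimensional optimization problem $\max_{q\in[0,1]} K(x,q,\hat{\textbf p})$ appearing in the equilibrium condition \eqref{Eq''}. Writing $K(x,q,\hat{\textbf p}) = qc_1 + (1-q)c_2 + g(qc_3 + (1-q)c_4)$ with $c_1 = f(x)$, $c_2 = \mathbb{E}_x(\phi_{\hat{\textbf p}}(X_1))$, $c_3 = h(x)$, $c_4 = \mathbb{E}_x(\psi_{\hat{\textbf p}}(X_1))$, the key observation is that when $g$ is convex the map $q\mapsto K(x,q,\hat{\textbf p})$ is convex on $[0,1]$ (a linear function plus a convex function of an affine function of $q$). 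A convex function on a compact interval attains its maximum at an endpoint, so $\max_{q\in[0,1]}K(x,q,\hat{\textbf p}) = \max\{K(x,0,\hat{\textbf p}),\,K(x,1,\hat{\textbf p})\}$, and these two values are exactly the right-hand sides of \eqref{I} and \eqref{II} respectively.

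First I would argue the ``only if'' direction: if $\hat{\textbf p}$ is an equilibrium, then by Proposition \ref{equivalent-eq-def} (condition \eqref{Eq'}) the left-hand side $\phi_{\hat{\textbf p}}(x) + g(\psi_{\hat{\textbf p}}(x))$ is $\geq K(x,q,\hat{\textbf p})$ for all $q$, hence in particular for $q=0$ and $q=1$, giving \eqref{I} and \eqref{II}. Moreover, by \eqref{Eq''}, the left-hand side equals $\max_{q\in[0,1]}K(x,q,\hat{\textbf p})$, which by convexity equals $\max\{K(x,0,\hat{\textbf p}),K(x,1,\hat{\textbf p})\}$; therefore equality holds in at least one of \eqref{I}, \eqref{II}. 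Conversely, suppose for each $x$ that \eqref{I} and \eqref{II} hold with equality in at least one. Then $\phi_{\hat{\textbf p}}(x)+g(\psi_{\hat{\textbf p}}(x)) \geq \max\{K(x,0,\hat{\textbf p}),K(x,1,\hat{\textbf p})\} = \max_{q\in[0,1]}K(x,q,\hat{\textbf p})$ by convexity, while the reverse inequality $\phi_{\hat{\textbf p}}(x)+g(\psi_{\hat{\textbf p}}(x)) \leq \max_{q\in[0,1]}K(x,q,\hat{\textbf p})$ always holds since $\phi_{\hat{\textbf p}}(x)+g(\psi_{\hat{\textbf p}}(x)) = K(x,\hat p_x,\hat{\textbf p})$ by Lemma \ref{phi-p-expressed-with-Exp} (or the fixed-point identities \eqref{phi-psi-notation} combined with the strong Markov property). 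Hence \eqref{Eq''} holds, so $\hat{\textbf p}$ is an equilibrium.

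I expect the main subtlety — rather than a genuine obstacle — to be the bookkeeping identity that $\phi_{\hat{\textbf p}}(x) + g(\psi_{\hat{\textbf p}}(x)) = K(x,\hat p_x,\hat{\textbf p})$, i.e. that evaluating $K$ at $q = \hat p_x$ recovers the equilibrium value; this is essentially the content of the cited lemma (that $\phi_{\hat{\textbf p}}(x) = \hat p_x f(x) + (1-\hat p_x)\mathbb{E}_x(\phi_{\hat{\textbf p}}(X_1))$ and similarly for $\psi$), obtained by conditioning on whether the coin at time $0$ stops the chain. One should also note the degenerate case where $x$ is absorbing: there $f(x) = \mathbb{E}_x(\phi_{\hat{\textbf p}}(X_1))$ and $h(x) = \mathbb{E}_x(\psi_{\hat{\textbf p}}(X_1))$, so $K(x,q,\hat{\textbf p})$ is constant in $q$ and both \eqref{I} and \eqref{II} hold with equality automatically, consistent with the statement. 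The concave case is entirely analogous and would be recorded separately, but for this corollary only convexity is used.
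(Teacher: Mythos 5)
Your proposal is correct and follows essentially the same route as the paper's proof: convexity of $q\mapsto K(x,q,\hat{\textbf{p}})$ (Lemma \ref{app-lemma1}) forces the maximum in \eqref{Eq''} onto the endpoints $q=0,1$, which are exactly the right-hand sides of \eqref{II} and \eqref{I}, and the identity $\phi_{\hat{\textbf{p}}}(x)+g(\psi_{\hat{\textbf{p}}}(x))=K(x,\hat p_x,\hat{\textbf{p}})$ (Lemma \ref{phi-p-expressed-with-Exp}) closes both directions. Your remark on absorbing states is a harmless extra observation; no gaps.
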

\begin{proof}
If, for each $x$, the necessary conditions \eqref{I} and \eqref{II} hold and one of them holds with equality then ${\hat {\textbf{p}}}$ is a equilibrium since \eqref{Eq''} is then satisfied, for each $x$, with $q=0$ or $q=1$; to see this use that convexity of $g$ implies (Lemma \ref{app-lemma1}) that 
\begin{align}	
q \mapsto K(x,q,{\hat{\textbf{p}}}) \mbox{ is convex}, \label{K-convex}
\end{align}	
and that it generally holds (Lemma \ref{phi-p-expressed-with-Exp}) that
\begin{align}	
\phi_{ {\textbf{p}}}(x) + g\left(\psi_{ {\textbf{p}}}(x)\right)  = K(x,p_x,{{\textbf{p}}}), 
\mbox{ for any $\textbf{p}$ and $x$}.\label{help1}
\end{align}
Let us show the reverse implication: 
If ${\hat {\textbf{p}}}$ is an equilibrium then trivially \eqref{I} and \eqref{II} hold. 
Moreover, by \eqref{K-convex} it %added word 
holds that the maximum in \eqref{Eq''} is attained by either $q=0$ or $q=1$. Recall that if ${\hat {\textbf{p}}}$ is an equilibrium then \eqref{Eq''} holds. 
Now note that  
(i) if $q=0$ is the maximizer in \eqref{Eq''} then \eqref{II} holds with equality, and 
(ii) if $q=1$ is the maximizer in \eqref{Eq''} then \eqref{I} holds equality. 
\end{proof}

\begin{corollary} \label{cor:NecSufCondConcave} Suppose $g$ in \eqref{payoff} is concave and differentiable. Then, 

\begin{itemize}

\item ${\hat {\textbf{p}}}$ is an equilibrium if and only if, for each $x\in E$, \eqref{I}, \eqref{II} and \eqref{III} hold and (at least) one of them holds with equality,

\item if, for a stopping strategy ${\hat {\textbf{p}}}$, \eqref{III} holds with equality for each $x\in E$, then ${\hat {\textbf{p}}}$ is an equilibrium.

\end{itemize}
\end{corollary}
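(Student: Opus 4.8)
The plan is to mimic the structure of the proof of Corollary~\ref{cor:NecSufCondConvex} but using the concavity of $g$ instead of its convexity. The key structural fact there was that convexity of $g$ makes $q\mapsto K(x,q,{\hat{\textbf{p}}})$ convex, so the maximum over $[0,1]$ is attained at an endpoint; here, concavity of $g$ makes $q\mapsto K(x,q,{\hat{\textbf{p}}})$ concave (this should follow from a companion to Lemma~\ref{app-lemma1}, or simply because $K$ is an affine-plus-$g$-of-affine function of $q$ and the composition of a concave function with an affine map is concave, and the sum with an affine function stays concave). For a concave function on $[0,1]$, the maximum is characterized by first-order conditions: either an interior stationary point, or a boundary point at which the one-sided derivative has the appropriate sign. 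This is precisely what \eqref{I}, \eqref{II}, \eqref{III} encode — \eqref{I} is the $q=1$ inequality, \eqref{II} is the $q=0$ inequality, and \eqref{III} is the (absolute value of the) derivative of $K(x,\cdot,{\hat{\textbf{p}}})$ evaluated at $q=\hat p_x$, which, being $\geq 0$ trivially, becomes informative only when combined with the ``equality in one of them'' clause.

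First I would record the concavity of $q\mapsto K(x,q,{\hat{\textbf{p}}})$ and the identity \eqref{help1}, i.e. $\phi_{\hat{\textbf{p}}}(x)+g(\psi_{\hat{\textbf{p}}}(x))=K(x,\hat p_x,{\hat{\textbf{p}}})$ from Lemma~\ref{phi-p-expressed-with-Exp}. Then for the first bullet, the ``only if'' direction: if ${\hat{\textbf{p}}}$ is an equilibrium, then \eqref{Eq''} holds, so $\hat p_x$ maximizes the concave function $K(x,\cdot,{\hat{\textbf{p}}})$ on $[0,1]$; \eqref{I} and \eqref{II} are then immediate (they are the $q=1$ and $q=0$ cases of \eqref{Eq'}), and I would argue that at a maximizer of a concave $C^1$ function on $[0,1]$ one of the three conditions must hold with equality — if $\hat p_x\in(0,1)$ then the derivative vanishes there and \eqref{III} holds with equality; if $\hat p_x=1$ then since the function is concave and maximized at $1$ its derivative at $1$ is $\geq 0$, which combined with the fact that $K(x,1,\cdot)-K(x,0,\cdot)$ has the same sign as... — more cleanly, if $\hat p_x=1$ is a maximizer then $K(x,1,{\hat{\textbf{p}}})\geq K(x,0,{\hat{\textbf{p}}})$ i.e. \eqref{I} holds with $\geq$, but to get equality \emph{somewhere} I would split: if the derivative at $q=1$ is zero then \eqref{III} holds with equality at $\hat p_x=1$; if it is strictly positive, then by concavity $K(x,1,{\hat{\textbf{p}}})>K(x,0,{\hat{\textbf{p}}})$ is strict yet $K(x,1,{\hat{\textbf{p}}})$ equals the left-hand side, so \eqref{I} holds with equality. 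The symmetric argument handles $\hat p_x=0$. For the ``if'' direction: given \eqref{I}, \eqref{II}, \eqref{III} with one of them an equality, I want to show $\hat p_x$ maximizes $K(x,\cdot,{\hat{\textbf{p}}})$. If \eqref{III} holds with equality, the derivative of the concave function $K(x,\cdot,{\hat{\textbf{p}}})$ vanishes at $\hat p_x$, so $\hat p_x$ is a global max on $[0,1]$ and \eqref{Eq'''} holds. If instead \eqref{I} holds with equality, then $K(x,\hat p_x,{\hat{\textbf{p}}})=f(x)+g(h(x))=K(x,1,{\hat{\textbf{p}}})$, and since \eqref{I} and \eqref{II} say the left-hand side dominates both $K(x,1,\cdot)$ and $K(x,0,\cdot)$, concavity of $K(x,\cdot,{\hat{\textbf{p}}})$ forces it to dominate $K(x,q,{\hat{\textbf{p}}})$ for all $q\in[0,1]$ (a concave function lying below a given value at both endpoints of the interval formed by $0$ and $1$... — here I need to be a little careful and actually use that $K(x,\hat p_x,\cdot)$, which is a point on the graph, dominates both endpoints, hence by concavity dominates the whole chord and thus the whole graph between $0$ and $1$); symmetric if \eqref{II} holds with equality. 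The second bullet is then essentially the sub-case of the first in which \eqref{III} holds with equality at every $x$, invoking \eqref{Eq'''} via Proposition~\ref{equivalent-eq-def}.

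The main obstacle I anticipate is the careful bookkeeping in the ``one of \eqref{I}--\eqref{III} holds with equality'' clause, particularly disentangling the boundary cases: showing that at $\hat p_x\in\{0,1\}$ the conjunction of the two endpoint inequalities plus a stationarity-or-equality alternative genuinely pins down the maximizer of the concave function, and conversely that equality in \eqref{I} or \eqref{II} (rather than \eqref{III}) still yields global optimality via concavity and the representation \eqref{help1}. I would lean on the one-dimensional calculus fact that a concave $C^1$ function $k$ on $[0,1]$ is maximized at $q^\ast$ iff $k'(q^\ast)=0$, or $q^\ast=0$ and $k'(0)\le 0$, or $q^\ast=1$ and $k'(1)\ge 0$ — and note that $k'(0)\le 0$ together with concavity gives $k(0)\ge k(q)$ for all $q$, while $k(1)-k(0)$ has the sign of... is not simply related, so the equality must instead be extracted from the representation $k(\hat p_x)=$ LHS, which equals $k(1)$ when \eqref{I} is tight and equals $k(0)$ when \eqref{II} is tight. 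Beyond this, the argument is routine given Lemmas~\ref{app-lemma1} and~\ref{phi-p-expressed-with-Exp} and Proposition~\ref{equivalent-eq-def}.
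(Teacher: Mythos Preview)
Your overall approach matches the paper's exactly: for the ``only if'' direction you invoke (a spelled-out version of) Theorem~\ref{lem:nec-cond}, and for the ``if'' direction you combine the concavity of $q\mapsto K(x,q,\hat{\textbf{p}})$ (Lemma~\ref{app-lemma1}) with the identity \eqref{help1} and a one-variable optimization argument --- precisely what the paper summarizes as ``basic optimization theory.'' The second bullet is indeed the sub-case where \eqref{III} holds with equality everywhere, and in that case your argument (vanishing derivative of a concave function $\Rightarrow$ global maximizer, hence \eqref{Eq'''}) is clean and correct.

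There is, however, a wrong step in your sufficiency argument for the case where equality occurs in \eqref{I} or \eqref{II} rather than in \eqref{III}. You write that since $K(x,\hat p_x,\hat{\textbf{p}})$ dominates both endpoint values $K(x,0,\hat{\textbf{p}})$ and $K(x,1,\hat{\textbf{p}})$, ``by concavity [it] dominates the whole chord and thus the whole graph between $0$ and $1$.'' Dominating the chord is true, but concavity means the graph lies \emph{above} the chord, not below --- so dominating the chord does not yield domination of the graph. Concretely, take the concave function $k(q)=-(q-\tfrac{3}{4})^2$ on $[0,1]$ with $\hat p_x=\tfrac{1}{2}$: then $k(\tfrac{1}{2})=k(1)=-\tfrac{1}{16}$ (so \eqref{I} would hold with equality) and $k(\tfrac{1}{2})>k(0)=-\tfrac{9}{16}$ (so \eqref{II} would hold strictly), yet $k(\tfrac{3}{4})=0>k(\tfrac{1}{2})$, and $\hat p_x=\tfrac{1}{2}$ is not a maximizer. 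Your own parenthetical caveat (``here I need to be a little careful'') is on target; the argument as you have it does not close this case. The paper's proof is equally terse at exactly this spot --- it just says ``basic optimization theory'' --- so the issue is not a divergence from the paper but a shared soft point that you have (helpfully) made explicit.
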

\begin{proof} 
Let us prove the first result: Theorem \ref{lem:nec-cond} implies that if ${\hat {\textbf{p}}}$ is an equilibrium then, for each $x\in E$, \eqref{I}, \eqref{II} and \eqref{III} hold and (at least) one of them holds with equality. 
To see that the other implication is true use concavity of 
$q \mapsto K(x,q,{\hat{\textbf{p}}})$ 
(Lemma \ref{app-lemma1}) and basic optimization theory to see that 
if, for each $x\in E$, \eqref{I}, \eqref{II} and \eqref{III} hold and (at least) one of them holds with equality then the equilibrium condition \eqref{Eq''} must be satisfied (use also the general observation \eqref{help1}). 
The second results is proved similarly. 
\end{proof}

\begin{definition} For an equilibrium ${\hat {\textbf{p}}}$  we denote by ${\hat {\textbf{P}}}$ the set of equivalent equilibria defined as the set of vectors $\textbf{p} \in [0,1]^N$ such that 
$\textbf{p}$ is an equilibrium satisfying $J_{{\textbf{p}}}(x) = J_{\hat {\textbf{p}}}(x)$ for each $x\in E$.
\end{definition}

{
%\color{blue} 
Considering Corollary \ref{cor:NecSufCondConvex} it seems intuitive that a convex $g$ should correspond to a pure equilibrium. It turns out that this is the case but that $g$ must be either strictly convex or affine. Indeed for affine $g$ the problem is easily seen to be time-consistent and it therefore from Theorem \ref{time-con-thm} follows that a stopping strategy is an equilibrium if and only if it corresponds to an optimal stopping time (in the usual sense), and hence by well-known results from the theory of optimal stopping it holds that if $g$ is affine then we only have to search for equilibria in the class of pure stopping strategies. The precise result for $g$ strictly convex is as follows:} 

\begin{theorem} \label{cor:NecSufCondConvexTWO} 
Suppose $g$ in \eqref{payoff} is  strictly 
 convex and an equilibrium ${\hat{\textbf{p}}}$ exists. Then, an equivalent pure equilibrium exists and such a pure equilibrium can be obtained by changing each $\hat {p}_x\in (0,1)$ (in case they exist) to $1$.
\end{theorem}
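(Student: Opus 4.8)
The idea is to show that when $g$ is strictly convex, any coordinate $\hat p_x \in (0,1)$ can be replaced by $1$ without changing the equilibrium value function, and that the resulting pure vector is still an equilibrium. The starting point is Corollary \ref{cor:NecSufCondConvex}: since $g$ is (strictly) convex, $\hat{\textbf{p}}$ being an equilibrium is equivalent to \eqref{I} and \eqref{II} holding for every $x$, with at least one of the two holding with equality at each $x$. The key structural fact is \eqref{help1}, namely $\phi_{\textbf p}(x)+g(\psi_{\textbf p}(x)) = K(x,p_x,{\textbf p})$, combined with strict convexity of $q\mapsto K(x,q,\hat{\textbf p})$ (Lemma \ref{app-lemma1}): a strictly convex function on $[0,1]$ attains its maximum only at an endpoint, so if $\hat p_x\in(0,1)$ then $K(x,\hat p_x,\hat{\textbf p})$ is strictly below $\max\{K(x,0,\hat{\textbf p}),K(x,1,\hat{\textbf p})\}$ unless $K(x,\cdot,\hat{\textbf p})$ is constant — but strict convexity rules that out. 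Since $\hat{\textbf p}$ is an equilibrium, \eqref{Eq''} forces $K(x,\hat p_x,\hat{\textbf p}) = \max_{q}K(x,q,\hat{\textbf p})$, so in fact this maximum must be attained at an endpoint and equal $K(x,\hat p_x,\hat{\textbf p})$; hence the maximizing endpoint value coincides with $K(x,\hat p_x,\hat{\textbf p})$, and by strict convexity the interior point $\hat p_x$ cannot be a maximizer at all — contradiction.

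So the first genuine observation is that this naive reading is too quick: the conclusion of Corollary \ref{cor:NecSufCondConvex} does allow interior $\hat p_x$, because $\phi_{\hat{\textbf p}}$ and $\psi_{\hat{\textbf p}}$ themselves depend on the whole vector $\hat{\textbf p}$, so changing $\hat p_x$ changes $K(x',\cdot,\cdot)$ at other states $x'$ as well. The correct approach is therefore global: define $\textbf{p}^*$ by $p^*_x = 1$ whenever $\hat p_x\in(0,1)$ and $p^*_x=\hat p_x$ otherwise, and prove (a) $\phi_{\textbf p^*}=\phi_{\hat{\textbf p}}$ and $\psi_{\textbf p^*}=\psi_{\hat{\textbf p}}$ everywhere, and then (b) that $\textbf p^*$ satisfies the equilibrium conditions. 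For (a): at a state $x$ with $\hat p_x\in(0,1)$, the equilibrium condition \eqref{Eq'''} says $\hat p_x$ is a maximizer of the strictly convex map $q\mapsto K(x,q,\hat{\textbf p})$ on $[0,1]$; a strictly convex function has an interior maximizer only if it is constant, which strict convexity forbids unless the function is affine, and strict convexity forbids that too — so in fact \emph{no state can have $\hat p_x\in(0,1)$}, i.e. every equilibrium for strictly convex $g$ is automatically pure, and the "changing" is vacuous.

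Here is where care is needed: the map $q\mapsto K(x,q,\hat{\textbf p})$ is strictly convex as a function of $q$ only if $h(x)\neq \mathbb{E}_x(\psi_{\hat{\textbf p}}(X_1))$; if $h(x)=\mathbb{E}_x(\psi_{\hat{\textbf p}}(X_1))$ then the $g$-term is constant in $q$ and $K(x,\cdot,\hat{\textbf p})$ is affine, so an interior maximizer is possible precisely when it is constant, i.e. when also $f(x)=\mathbb{E}_x(\phi_{\hat{\textbf p}}(X_1))$. In that degenerate case \eqref{I} and \eqref{II} both hold with equality and, crucially, stopping at $x$ with probability $1$ yields exactly the same $\phi$ and $\psi$ values: one checks from the defining recursions for $\phi_{\textbf p},\psi_{\textbf p}$ (cf. \eqref{phi-psi-notation} and the one-step decomposition used in Lemma \ref{phi-p-expressed-with-Exp}) that replacing $\hat p_x$ by $1$ at such a state leaves $\phi$ and $\psi$ unchanged at $x$, and then an induction / fixed-point-uniqueness argument over the (finite, absorbing) chain propagates this to all other states. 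Thus after the replacement $\textbf p^*$ has $\phi_{\textbf p^*}=\phi_{\hat{\textbf p}}$, $\psi_{\textbf p^*}=\psi_{\hat{\textbf p}}$, so $K(\cdot,\cdot,\textbf p^*)=K(\cdot,\cdot,\hat{\textbf p})$, and since the equilibrium inequalities \eqref{I},\eqref{II} are expressed purely through $\phi,\psi$ and these are unchanged, $\textbf p^*$ is again an equilibrium by Corollary \ref{cor:NecSufCondConvex}, with $J_{\textbf p^*}=J_{\hat{\textbf p}}$, i.e. $\textbf p^*\in\hat{\textbf P}$.

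The main obstacle is step (a) in the degenerate affine case: verifying that switching $\hat p_x$ to $1$ at a state where $K(x,\cdot)$ is constant does not perturb $\phi_{\hat{\textbf p}},\psi_{\hat{\textbf p}}$ at that state, and then showing the perturbation does not cascade through the rest of the chain. This should follow from writing $\phi_{\textbf p}(x) = p_x f(x) + (1-p_x)\mathbb{E}_x(\phi_{\textbf p}(X_1))$ (and similarly for $\psi_{\textbf p}$), observing that equality in \eqref{I} and \eqref{II} at $x$ means $f(x)=\mathbb{E}_x(\phi_{\hat{\textbf p}}(X_1))=\phi_{\hat{\textbf p}}(x)$ and likewise for $\psi$, so the right-hand side is independent of the value $p_x$; one then argues that $(\phi_{\hat{\textbf p}},\psi_{\hat{\textbf p}})$ still solves the (uniquely solvable, by absorption) system of equations defining $(\phi_{\textbf p^*},\psi_{\textbf p^*})$. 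I would carry the argument out in this order: invoke strict convexity of $K(x,\cdot,\hat{\textbf p})$ and \eqref{Eq'''} to locate interior coordinates only in the degenerate case; record the equality-of-values consequences at such states; show invariance of $\phi,\psi$ under the coordinate switch via the recursion and uniqueness of solutions; and finally re-apply Corollary \ref{cor:NecSufCondConvex} to conclude $\textbf p^*$ is a pure equilibrium equivalent to $\hat{\textbf p}$.
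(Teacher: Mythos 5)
Your final argument is correct, and its first half coincides with the paper's: at any state $x$ with $\hat p_x\in(0,1)$, condition \eqref{Eq'''} together with convexity of $q\mapsto K(x,q,\hat{\textbf{p}})$ (Lemma \ref{app-lemma1}) and strict convexity of $g$ forces $h(x)=\mathbb{E}_x(\psi_{\hat{\textbf{p}}}(X_1))$ and then $f(x)=\mathbb{E}_x(\phi_{\hat{\textbf{p}}}(X_1))$, exactly as in the paper. (Your opening claim that no interior coordinate can exist is false --- $K(x,\cdot,\hat{\textbf{p}})$ is only convex, not strictly convex, in $q$, and can be constant --- but you correct this yourself in the next paragraph, so it does not damage the final argument.) Where you genuinely differ is the second half. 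The paper flips one interior coordinate $\hat p_y$ to $1$ at a time, proves $\mathbb{E}_x(f(X_{\tau_{\tilde{\textbf{p}}}}))=\mathbb{E}_x(f(X_{\tau_{\hat{\textbf{p}}}}))$ by splitting on $\{\tau_{\tilde{\textbf{p}}}=\tau_{\hat{\textbf{p}}}\}$ versus $\{\tau_{\tilde{\textbf{p}}}<\tau_{\hat{\textbf{p}}}\}$ and applying the (strong) Markov property at $y$ together with $f(y)=\mathbb{E}_y(\phi_{\hat{\textbf{p}}}(X_1))$, and then concludes by induction over the interior coordinates. You instead flip all interior coordinates simultaneously and identify $\phi_{\textbf{p}^*}$ with $\phi_{\hat{\textbf{p}}}$ (and likewise $\psi$) by observing that $\phi_{\hat{\textbf{p}}}$ solves the one-step recursion associated with $\textbf{p}^*$ (Lemma \ref{phi-p-expressed-with-Exp} plus the equalities just derived at interior states) and invoking uniqueness of solutions to that recursion (Lemma \ref{lem-phi}); since the conditions \eqref{I}--\eqref{II} of Corollary \ref{cor:NecSufCondConvex} depend on the strategy only through $\phi$ and $\psi$, this immediately yields that $\textbf{p}^*$ is an equivalent pure equilibrium. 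Your route is shorter and purely algebraic, avoiding the pathwise strong-Markov computation and the induction; the paper's route avoids any appeal to a uniqueness lemma. One detail you should make explicit: Lemma \ref{lem-phi} assumes $p_x>0$ at every absorbing state, whereas $\textbf{p}^*$ may have $p^*_x=0$ at an absorbing state; this is harmless because at an absorbing state $\phi_{\textbf{p}}(x)=f(x)$ and $\psi_{\textbf{p}}(x)=h(x)$ irrespective of $p_x$ (so one may without loss of generality set the absorbing coordinates to $1$ before applying the lemma), but as written the uniqueness step has this small gap to close.
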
 

\begin{proof}
 Suppose $y\in E$ is such that $\hat {p}_y\in (0,1)$. Then
\begin{align}\notag
q \mapsto K(y,q,{\hat{\textbf{p}}}) 
\left(=qf(y)  + (1-q)\mathbb{E}_y\left(\phi_{\hat{\textbf{p}}}(X_1)\right) + g\left(qh(y) + (1-q)\mathbb{E}_y\left( \psi_{\hat{\textbf{p}}} (X_1) \right)\right)\right)
\end{align}
has a maximum at $q=\hat {p}_y$. Since this function is convex (Lemma \ref{app-lemma1})  and has an interior maximum (by definition of equilibrium and since $\hat {p}_y\in (0,1)$) it must be a constant function. In particular, using that $g$ is strictly convex and that 
\begin{align}\notag
q \mapsto qf(y)  + (1-q)\mathbb{E}_y\left(\phi_{\hat{\textbf{p}}}(X_1)\right)
\end{align}
is linear, we find that  
\begin{align}\notag
q \mapsto qh(y)  + (1-q)\mathbb{E}_y\left(\psi_{\hat{\textbf{p}}}(X_1)\right)
\end{align}
is constant i.e. 
$h(y) = \mathbb{E}_y\left(\psi_{\hat{\textbf{p}}}(X_1)\right)$, but then it also follows that 
$f(y) = \mathbb{E}_y\left(\phi_{\hat{\textbf{p}}}(X_1)\right)$. Now write
\begin{gather*} 
{\tilde{\textbf{p}}}_x=  
\begin{cases}
	{\hat{\textbf{p}}}_x,																&x \neq y \\
	1,																										&x = y.\\
	\end{cases} \label{mean-var-Pr1}
\end{gather*}
Fix a state $x\in E$. Clearly, $\tau_{\tilde{\textbf{p}}}\leq \tau_{\hat{\textbf{p}}}$ a.s. Using the above and the Markov property we obtain 
\begin{align}
\mathbb{E}_x(f(X_{\tau_{{\hat{\textbf{p}}}}}))
\notag & =  \mathbb{E}_x\left(f(X_{\tau_{\hat{\textbf{p}}}})I_{\{\tau_{\tilde{\textbf{p}}}=\tau_{\hat{\textbf{p}}}\}}\right)+
\mathbb{E}_x\left(f(X_{\tau_{\hat{\textbf{p}}}})I_{\{\tau_{\tilde{\textbf{p}}}<\tau_{\hat{\textbf{p}}}\}}\right)\\
\notag & =  \mathbb{E}_x\left(f(X_{\tau_{\hat{\textbf{p}}}})I_{\{\tau_{\tilde{\textbf{p}}}=\tau_{\hat{\textbf{p}}}\}}\right)+
\mathbb{E}_x\left(\mathbb{E}_{X_{\tau_{\tilde{\textbf{p}}}}}\left( \mathbb{E}_{X_1}\left( f(X_{\tau_{\hat{\textbf{p}}}}) \right) \right)
I_{\{\tau_{\tilde{\textbf{p}}}<\tau_{\hat{\textbf{p}}}\}}\right)\\
\notag & =  \mathbb{E}_x\left(f(X_{\tau_{\hat{\textbf{p}}}})I_{\{\tau_{\tilde{\textbf{p}}}=\tau_{\hat{\textbf{p}}}\}}\right)+
\mathbb{E}_x\left(\mathbb{E}_y\left( \phi_{\hat{\textbf{p}}}(X_1)\right)
I_{\{\tau_{\tilde{\textbf{p}}}<\tau_{\hat{\textbf{p}}}\}}\right)\\
\notag & =  \mathbb{E}_x\left(f(X_{\tau_{\hat{\textbf{p}}}})I_{\{\tau_{\tilde{\textbf{p}}}=\tau_{\hat{\textbf{p}}}\}}\right)+
\mathbb{E}_x\left(f(y)I_{\{\tau_{\tilde{\textbf{p}}}<\tau_{\hat{\textbf{p}}}\}}\right)\\
\notag & =  \mathbb{E}_x(f(X_{\tau_{\tilde{\textbf{p}}}})).
	\end{align}
It can similarly be shown that 
\[\mathbb{E}_x(h(X_{\tau_{\hat{\textbf{p}}}})) =  \mathbb{E}_x(h(X_{\tau_{\tilde{\textbf{p}}}})).\]  
It can now be directly verified that ${\tilde{\textbf{p}}}$ is an equilibrium. It it also easy to see that $J_{\hat{\textbf{p}}} (x) = J_{\tilde{\textbf{p}}} (x)$. Now, the claim holds by a trivial induction. 
\end{proof}

{
%\color{blue}
The following example regards a non-strictly convex $g$ and a mixed equilibrium for which no pure equivalent equilibrium exists; implying that the assumption of \emph{strict} convexity in Theorem \ref{cor:NecSufCondConvexTWO} is necessary.} 

\begin{example} \label{new-example}
{
%\color{blue} 
Consider the Markov chain $X$  defined in Figure \ref{MC-1}. 
\begin{figure}[h]
\centering 
\includegraphics{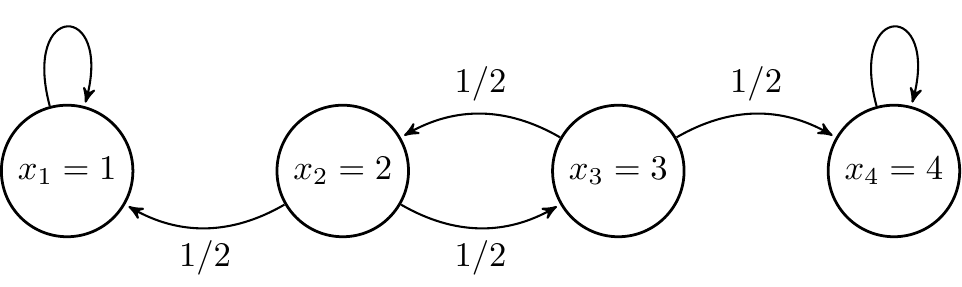}
\caption{The Markov chain $X$ in Example \ref{new-example}.}\label{MC-1}
\end{figure} 
Let $f$ be identically equal to zero, 
$h$ be defined by 
$
h(1)=h(2)=0, 
h(3)=1$ and 
$ 
h(4)=2
$, and $g(x):=(x-1)_+$. Let us first show that 
\begin{align}
\hat {\textbf{p}}= \left(1,\frac{1}{2},0,1\right)^T \label{new-example-eq}
\end{align}
is an equilibrium. 
Since $f=0$ it directly follows, from \eqref{phi-psi-notation}, that 
$\phi_{\hat {\textbf{p}}}(1)
=\phi_{\hat {\textbf{p}}}(2)
=\phi_{\hat {\textbf{p}}}(3)
=\phi_{\hat {\textbf{p}}}(4)=0$.  
Simple calculations also yield  
$
\psi_{\hat {\textbf{p}}}(1)=0, 
\psi_{\hat {\textbf{p}}}(2)=\frac{2}{7}, 
\psi_{\hat {\textbf{p}}}(3)=\frac{8}{7}, 
\psi_{\hat {\textbf{p}}}(4)=2$. This implies that
$\mathbb{E}_{2}\left( \psi_{\hat {\textbf{p}}} (X_1) \right)=\frac{4}{7}$. 
Using the above and  \eqref{K-func} we find that 
\[
K(2,q,{\hat {\textbf{p}}})= g\left(qh(2) + (1-q)\mathbb{E}_{2}\left( \psi_{\hat {\textbf{p}}} (X_1) \right) \right)= \left((1-q)\frac{4}{7}-1\right)_+  
\]
for which $q=\frac{1}{2}=\hat p_2$ is a maximizer (along with every other $q\in[0,1]$); meaning that the condition in \eqref{Eq'''} holds for the state $x_2=2$. Now, find that 
$\mathbb{E}_{3}\left( \psi_{\hat {\textbf{p}}} (X_1) \right)=\frac{8}{7}$, so that  
\[
K(3,q,{\hat {\textbf{p}}})
= g\left(qh(3) + (1-q)\mathbb{E}_3\left(\psi_{\hat {\textbf{p}}} (X_1) \right) \right)
= \left(q+(1-q)\frac{8}{7}-1\right)_+
\]
which is maximized when  $q=0=\hat p_3$, meaning that the condition in \eqref{Eq'''} holds for the state $x_3=3$. 
Clearly, the condition in \eqref{Eq'''} holds also for the absorbing states $x_1$ and $x_4$. We thus conclude that \eqref{Eq'''} holds and that \eqref{new-example-eq} therefore is an equilibrium. 

Let us now verify that that no pure equilibrium equivalent to \eqref{new-example-eq} exists. Since $x_1$ and $x_4$ are absorbing it suffices to check that neither of 
$
\textbf{p}=(1,0,0,1)^T,
\textbf{p}=(1,0,1,1)^T,
\textbf{p}=(1,1,0,1)^T$ or 
$\textbf{p}=(1,1,1,1)^T$ is an equilibrium equivalent to \eqref{new-example-eq}; we remark however, that it is easy to verify that some of these strategies are notwithstanding equilibria. First, note that 
$
J_{\hat {\textbf{p}}}(3) = 
g\left( \psi_{\hat {\textbf{p}}}(3) \right)= \left( \frac{8}{7}   -1\right)_+ = \frac{1}{7}$.   
Second, if $\textbf{p}=(1,1,0,1)^T$ then $\psi_{{\textbf{p}}}(2)=0$ and  $\psi_{{\textbf{p}}}(4)=2$, which implies that
$\psi_{{\textbf{p}}}(3)=\frac{1}{2}\cdot 0+\frac{1}{2}\cdot 2=1$. Hence, $J_{{\textbf{p}}}(3) = 
g\left( \psi_{{\textbf{p}}}(3) \right)= \left(1-1\right)_+ \neq J_{\hat {\textbf{p}}}(3)$ so that 
$\textbf{p}=(1,1,0,1)^T$ cannot be an equilibrium equivalent to \eqref{new-example-eq}. 
Third, if $\textbf{p}=(1,0,0,1)^T$ then it is easy to verify that $\psi_{\textbf{p}}(3) = \frac{4}{3}$, implying that 
$J_{{\textbf{p}}}(3) = 
g\left( \psi_{{\textbf{p}}}(3) \right)= \left( \frac{4}{3}-1\right)_+ = \frac{1}{3} \neq J_{\hat {\textbf{p}}}(3)$ so that 
$\textbf{p}=(1,0,0,1)^T$ cannot be an equilibrium equivalent to \eqref{new-example-eq}. 
Four, if $\textbf{p}=(1,1,1,1)^T$ or $\textbf{p}=(1,0,1,1)^T$ then $\psi_{\textbf{p}}(3) = 1$, implying that 
$J_{{\textbf{p}}}(3) = 
g\left( \psi_{{\textbf{p}}}(3) \right)= \left(1-1\right)_+ \neq J_{\hat {\textbf{p}}}(3)$ so that 
$\textbf{p}=(1,1,1,1)^T$ and $\textbf{p}=(1,0,1,1)^T$ cannot be equilibria equivalent to \eqref{new-example-eq}.}\end{example}

\begin{remark}
The previous theorem implies that in the strictly convex case we just have to check at most the $2^N$ pure strategies to check whether equilibria exist.
\end{remark}

\section{A fixed point problem characterization and existence results} \label{fixed-point-sec}
In this section we derive equilibrium existence results which rely on the observation that an equilibrium is the solution to a certain fixed point problem. 

\begin{definition} \label{Gamma-p-defNEWa} Let $\Gamma:[0,1]^N \rightarrow 2^{[0,1]^N}$ be the point-to-set mapping taking vectors 
${\textbf{p}}\in [0,1]^N$ as input and as output giving $\Gamma({\textbf{p}})$ defined as the set of all vectors 
${\textbf{p}}^*$ which, for each $x \in E$, satisfy
\begin{align} 
{p}_x^*  \in \mbox{\normalfont arg}\hspace{-1mm}\max_{q \in [0,1]}K(x,q,{\textbf{p}}).\label{Gamma-p-def}
\end{align}
\end{definition}

\begin{proposition} \label{FP-thm1} A stopping strategy ${\hat {\textbf{p}}}$ is an equilibrium if and only if it is a fixed point of the mapping $\Gamma$, i.e. if and only if ${\hat {\textbf{p}}} \in \Gamma({\hat {\textbf{p}}})$.
\end{proposition}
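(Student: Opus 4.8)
The plan is to read off the proposition directly by unwinding Definition~\ref{Gamma-p-defNEWa} and matching it against the equivalent equilibrium condition \eqref{Eq'''} already supplied by Proposition~\ref{equivalent-eq-def}. Concretely, for a stopping strategy $\hat{\textbf{p}}\in[0,1]^N$, the statement $\hat{\textbf{p}}\in\Gamma(\hat{\textbf{p}})$ means, by Definition~\ref{Gamma-p-defNEWa} applied with input vector $\textbf{p}=\hat{\textbf{p}}$ and with the output vector $\textbf{p}^*$ taken to be $\hat{\textbf{p}}$ itself, precisely that $\hat{p}_x\in\arg\max_{q\in[0,1]}K(x,q,\hat{\textbf{p}})$ for every $x\in E$. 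This is word-for-word condition \eqref{Eq'''}.

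First I would record the chain of equivalences: $\hat{\textbf{p}}$ is an equilibrium $\iff$ condition \eqref{Eq'''} holds (by Proposition~\ref{equivalent-eq-def}) $\iff$ $\hat{p}_x\in\arg\max_{q\in[0,1]}K(x,q,\hat{\textbf{p}})$ for all $x\in E$ $\iff$ $\hat{\textbf{p}}\in\Gamma(\hat{\textbf{p}})$ (by \eqref{Gamma-p-def} in Definition~\ref{Gamma-p-defNEWa}). Every link here except the first is a definitional identity, and the first link has already been established in the excerpt, so no further work is needed.

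Since the argument is purely a matter of comparing the two definitions, there is no genuine obstacle; the only point deserving a sentence of care is notational: Definition~\ref{Gamma-p-defNEWa} writes the output of $\Gamma$ with a separate symbol $\textbf{p}^*$, so one should note explicitly that a fixed point of a point-to-set map is by definition an input that is also an admissible output, i.e.\ one specialises to $\textbf{p}=\textbf{p}^*=\hat{\textbf{p}}$ before reading \eqref{Gamma-p-def} as \eqref{Eq'''}.
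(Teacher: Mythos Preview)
Your proposal is correct and matches the paper's own proof, which simply states that the result follows directly from Proposition~\ref{equivalent-eq-def}. You have merely spelled out in more detail the identification of the fixed-point condition $\hat{\textbf{p}}\in\Gamma(\hat{\textbf{p}})$ with condition \eqref{Eq'''}, which is exactly the intended argument.
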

\begin{proof} Follows directly from Proposition \ref{equivalent-eq-def}.
\end{proof}

\begin{theorem} \label{FP-thm2} 
Suppose the set of maximizers  in \eqref{Gamma-p-def} is {an interval} for each 
${\textbf{p}}\in [0,1]^N $ and $x \in E$. Then the mapping $\Gamma$ has a fixed point and an equilibrium exists. 
\end{theorem}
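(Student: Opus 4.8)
The plan is to obtain the fixed point from Kakutani's fixed point theorem and then to conclude by Proposition \ref{FP-thm1}. Recall that Kakutani's theorem guarantees a fixed point of a point-to-set mapping of a nonempty, compact, convex subset of Euclidean space into itself, provided the mapping has nonempty convex values and a closed graph. Since $[0,1]^N$ is nonempty, compact and convex and $\Gamma$ maps it into its own power set, it suffices to check these two properties of $\Gamma$.

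\emph{Nonempty convex values.} Fix $\textbf{p}\in[0,1]^N$ and $x\in E$. By \eqref{K-func} the map $q\mapsto K(x,q,\textbf{p})$ is an affine function of $q$ plus the composition of $g$ with an affine function of $q$; since $g$ is continuous (Assumption \ref{g-assum}) this map is continuous on the compact interval $[0,1]$ and hence attains a maximum, so the set of maximizers $\operatorname{arg\,max}_{q\in[0,1]}K(x,q,\textbf{p})$ is nonempty, and by hypothesis it is an interval, hence convex. As $\Gamma(\textbf{p})$ is, by Definition \ref{Gamma-p-defNEWa}, the Cartesian product over $x\in E$ of these sets, it is nonempty and convex.

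\emph{Closed graph.} The essential point is that $(q,\textbf{p})\mapsto K(x,q,\textbf{p})$ is jointly continuous for each $x$; granting this, suppose $\textbf{p}_n\to\textbf{p}$ and $\textbf{p}_n^\ast\to\textbf{p}^\ast$ with $\textbf{p}_n^\ast\in\Gamma(\textbf{p}_n)$. Then for every $x\in E$ and every $q\in[0,1]$ we have $K(x,(p_n^\ast)_x,\textbf{p}_n)\ge K(x,q,\textbf{p}_n)$, and letting $n\to\infty$ gives $K(x,(p^\ast)_x,\textbf{p})\ge K(x,q,\textbf{p})$, so $(p^\ast)_x$ maximizes $q\mapsto K(x,q,\textbf{p})$ over $[0,1]$; as this holds for all $x\in E$, $\textbf{p}^\ast\in\Gamma(\textbf{p})$. (Equivalently, one may invoke Berge's maximum theorem, the constraint set $[0,1]$ being fixed.) Joint continuity of $K$ follows from \eqref{K-func} and the continuity of $g$ once one knows that $\textbf{p}\mapsto\phi_\textbf{p}(x)$ and $\textbf{p}\mapsto\psi_\textbf{p}(x)$ are continuous for each $x$: by Assumption \ref{absorb-assum} these functions are the unique solutions of linear systems whose coefficients depend polynomially on $\textbf{p}$, and hence depend continuously on $\textbf{p}$ (this is carried out in the appendix).

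Having verified the hypotheses of Kakutani's theorem, $\Gamma$ admits a fixed point $\hat{\textbf{p}}\in\Gamma(\hat{\textbf{p}})$, and by Proposition \ref{FP-thm1} this $\hat{\textbf{p}}$ is an equilibrium. The main obstacle is the continuity of $\textbf{p}\mapsto(\phi_\textbf{p},\psi_\textbf{p})$ underlying the closed-graph step; the convexity of the values of $\Gamma$ is immediate from the interval hypothesis, which is precisely the role that hypothesis plays (without it one cannot expect convex-valuedness, since for non-convex $g$ the set of maximizers of $q\mapsto K(x,q,\textbf{p})$ may be a two-point set).
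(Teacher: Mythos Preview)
Your proof is correct and follows essentially the same route as the paper: verify the hypotheses of Kakutani's fixed point theorem (nonempty convex values from the interval assumption, closed graph from continuity of $(q,\textbf{p})\mapsto K(x,q,\textbf{p})$, the latter resting on the continuity of $\textbf{p}\mapsto\phi_\textbf{p},\psi_\textbf{p}$ established in the appendix), and then invoke Proposition~\ref{FP-thm1}. The only cosmetic difference is that the paper phrases the closed-graph step as a sequential condition and cites Lemma~\ref{lem1} directly for continuity, whereas you sketch an alternative linear-system heuristic before deferring to the appendix; note that the linear-system argument as stated is not quite complete (the system can be singular, e.g.\ when $p_x=0$ everywhere), so the appendix's dominated-convergence proof is what actually carries the weight.
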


\begin{proof} 
The assumed convexity for the set of maximizers for each particular $x$ in \eqref{Gamma-p-def} implies that $\Gamma({\textbf{p}})$ will be a hyperrectangle and thus a convex set. %; to see this use e.g. that each vector in $\Gamma({\textbf{p}})$ lives in $[0,1]^N$. 
Since a maximizer in \eqref{Gamma-p-def} necessarily exists follows that $\Gamma({\textbf{p}})$ is non-empty. To summarize:
\begin{align} 
\mbox{$\Gamma({\textbf{p}})$ is a convex and non-empty set.}\label{kak:1} 
\end{align} 
Suppose $\{_k{\textbf{p}}\}$ is a sequence of vectors in $[0,1]^N$ with $\lim_{k\rightarrow \infty}{_k\textbf{p}}={\textbf{p}}$. From Lemma \ref{lem1} we know that 
\begin{align} \notag
\lim_{k\rightarrow \infty}\mathbb{E}_x\left(\phi_{_k{\textbf{p}}}(X_1)\right) = \mathbb{E}_x\left(\phi_{\textbf{p}}(X_1)\right), 
\mbox{ for each $x\in E$}. \label{FP-thm2:eq} 
\end{align} 
Using the analogous result for $\psi_{\textbf{p}}$ and Assumption \ref{g-assum} we find that for any fixed $q$ and $x$ it holds that 
\[
\lim_{k\rightarrow \infty}K(x,q,{_k{\textbf{p}}}) = K(x,q,{{\textbf{p}}}).
\]
Hence, using also that $q\mapsto K(x,q,{\textbf{p}})$ is continous (for any fixed $x$ and ${\textbf{p}}$), it is easy to see that: 
\begin{align} 
\label{kak:2}
\begin{split} 
&\mbox{For any two sequences 
$\{_k{\textbf{p}}\}$ and $\{_k{\textbf{q}}\}$ on $[0,1]^N$, with $\lim_{n\rightarrow \infty}{_k\textbf{p}}={\textbf{p}}$ and }\\
& \mbox{$\lim_{n\rightarrow \infty}{_k\textbf{q}}={\textbf{q}}$, that satisfy
$_k{\textbf{q}} \in \Gamma(_k{\textbf{p}})$ for all $k$,  it holds that ${\textbf{q}}\in \Gamma({\textbf{p}})$.}
\end{split} 
\end{align} 
%added word in the above

We also note that:
\begin{align}
\mbox{The set $[0,1]^N$ is non-empty, compact and convex.} \label{kak:3}
\end{align}

From \eqref{kak:1},\eqref{kak:2} and \eqref{kak:3} 
it follows that %added word
we may use Kakutani's fixed point theorem, see e.g. \cite[p. 121]{fan1952fixed}, to conclude that the mapping $\Gamma$ has a fixed point. The existence of an equilibrium follows (using also Proposition \ref{FP-thm1}). 
\end{proof}

\begin{corollary} \label{g-concave-eq-exists-cor} Suppose $g$  in \eqref{payoff} is concave, then an equilibrium exists.
\end{corollary}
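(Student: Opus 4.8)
The plan is to deduce this directly from the fixed point existence result, Theorem \ref{FP-thm2}. By that theorem it suffices to check a single hypothesis: that for every ${\textbf{p}}\in[0,1]^N$ and every $x\in E$ the set of maximizers of $q\mapsto K(x,q,{\textbf{p}})$ over $[0,1]$ — which is exactly the set appearing in \eqref{Gamma-p-def} — is an interval.

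First I would recall the explicit form of $K$ from \eqref{K-func}: for fixed $x$ and ${\textbf{p}}$,
\[
K(x,q,{\textbf{p}}) = qf(x) + (1-q)\mathbb{E}_x(\phi_{\textbf{p}}(X_1)) + g\bigl(qh(x) + (1-q)\mathbb{E}_x(\psi_{\textbf{p}}(X_1))\bigr).
\]
The first two terms are affine in $q$, and the argument of $g$ is also affine in $q$; hence, when $g$ is concave, $q\mapsto K(x,q,{\textbf{p}})$ is the sum of an affine function and a concave function composed with an affine map, so it is concave on $[0,1]$. This is precisely the statement recorded in Lemma \ref{app-lemma1}.

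Next, being concave (hence continuous) on the compact interval $[0,1]$, the map $q\mapsto K(x,q,{\textbf{p}})$ attains its maximum, and the set of maximizers of a concave function on an interval is convex, i.e. again an interval (possibly a single point). Therefore the hypothesis of Theorem \ref{FP-thm2} is verified for every ${\textbf{p}}$ and $x$, and an equilibrium exists.

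There is essentially no obstacle here: the only substantive input is the elementary fact that concavity is preserved under composition with affine maps and under adding affine functions, which is already isolated as Lemma \ref{app-lemma1}; everything else is a direct appeal to Theorem \ref{FP-thm2} (and, through it, to Kakutani's theorem). The one point worth noting explicitly is the non-emptiness of the argmax, but this is immediate from continuity on a compact set.
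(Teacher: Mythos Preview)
Your proposal is correct and follows essentially the same approach as the paper: use Lemma \ref{app-lemma1} to conclude that $q\mapsto K(x,q,{\textbf{p}})$ is concave, deduce that the argmax is an interval, and invoke Theorem \ref{FP-thm2}. The only difference is that you spell out a bit more detail (the explicit form of $K$ and the non-emptiness of the argmax), but the argument is the same.
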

\begin{proof}  
The concavity of $g$ implies the concavity of $q \mapsto K(x,q,{\textbf{p}})$, see Lemma \ref{app-lemma1}. It directly follows that the set of maximizers  in \eqref{Gamma-p-def} is an interval for each ${\textbf{p}}\in [0,1]^N $ and $x \in E$. The result follows from Theorem \ref{FP-thm2}. 
\end{proof}

\begin{remark}
{
%\color{blue} 
The property that an equilibrium is a fixed point of some suitably defined mapping and the use of fixed point theorems to establish existence of equilibria is standard in game theory. An early reference observing this and relying on Kakutani's fixed point theorem is \cite{nash1950equilibrium}. The connection to fixed point problems has also been made in time-inconsistent control theory.} 
In \cite{huang2018strong} time-inconsistent regular stochastic control in continuous time is studied and an equilibrium existence result is proved using fixed point arguments similar to those used here. 
In \cite{tomas-disc} time-inconsistent stochastic control in discrete time is studied and it is noted that an equilibrium can be viewed as the fixed point of a particular mapping. 
\end{remark}

\section{The myopic adjustment process and equilibrium stability} \label{sec:myopic}

An iteration of the type ${\textbf{p}}_{k+1}\in \Gamma({\textbf{p}}_k)$ where ${\textbf{p}}_0 \in [0,1]^N$ (see Definition \ref{Gamma-p-defNEWa}) corresponds to what in economics is known as a myopic adjustment process for decisions in repeated interactive situations, 
see e.g. \cite{
borgers1997learning,
erev1998predicting,
kosfeld2002myopic}. The interpretation here is that every agent in the game adjusts his decision at each $k$-step under the (myopic) assumption that all other agents will stay with their strategy. 
Since $\Gamma({\textbf{p}}_k)$ is in general a set of vectors, i.e. a set of stopping strategies, it holds that this iteration is not uniquely defined. We thus define ${{\bar \Gamma}}({\textbf{p}}_k)$ as the largest (in Euclidean norm, or, equivalently, element-wise) vector in $\Gamma({\textbf{p}}_k)$ 
 and consider the myopic adjustment process
\begin{align} \label{myopic-adj-proc}
{\textbf{p}}_{k+1}= {{\bar \Gamma}}({\textbf{p}}_k), \mbox{ with }  {\textbf{p}}_0 \in [0,1]^N.
\end{align}
This corresponds to the interpretation that there is preference in the myopic adjustment for a higher probability of stopping over a smaller one when they give the same value (in the maximization in \eqref{Gamma-p-def}). Note that the myopic adjustment process can be tried as a constructive algorithm for finding equilibria.

The following are now natural questions:

\textbf{1.} Suppose ${\hat {\textbf{p}}}$ is an equilibrium and that we perturb ${\hat {\textbf{p}}}$ slightly by considering a stopping strategy 
${\hat {\textbf{p}}}^{\epsilon} \in B({\hat {\textbf{p}}};\epsilon)$ for some small $\epsilon>0$, where $B({\hat {\textbf{p}}};\epsilon)$ denotes a ball with radius $\epsilon$ centered at ${\hat {\textbf{p}}}$. In which circumstances does then the myopic adjustment process \eqref{myopic-adj-proc} with ${\textbf{p}}_0={\hat {\textbf{p}}}^\epsilon$ converge to ${\hat {\textbf{p}}}$?

\textbf{2.} In which circumstances does the myopic adjustment process \eqref{myopic-adj-proc} converge to an equilibrium ${\hat {\textbf{p}}}$ for \emph{any}  initial value ${\textbf{p}}_0$?

In the rest of this section we try to shed some light on these questions by defining and investigating different notions of equilibrium stability.

\begin{definition} An equilibrium ${\hat {\textbf{p}}}$ is said to be \emph{strongly locally stable} if there exists a constant $\epsilon$ such that for every 
${{\hat {\textbf{p}}}^{\epsilon}}\in B({\hat {\textbf{p}}};\epsilon)$ there exists an equivalent equilibrium $\textbf{p} \in {\hat {\textbf{P}}}$ such that for every $x$ it holds that
\begin{align}\notag
K(x,{p}_x,{{\hat {p}}^{\epsilon}}) \geq K(x,q,{{\hat {\textbf{p}}}^{\epsilon}}), \mbox{ for all $q \in [0,1]$.}
\end{align}
\end{definition}

\begin{definition} An equilibrium ${\hat {\textbf{p}}}$ is said to be \emph{locally stable} if for some $\epsilon>0$ and every ${{\hat {\textbf{p}}}^{\epsilon}} \in B({\hat {\textbf{p}}};\epsilon)$ the myopic adjustment process with ${\textbf{p}}_0 = {{\hat {\textbf{p}}}^{\epsilon}}$ converges to an equivalent equilibrium $\textbf{p} \in {\hat {\textbf{P}}}$. An equilibrium is said to be \emph{unstable} if it is not locally stable. 
\end{definition}

It is easy to see that a strongly locally stable equilibrium is locally stable.

\begin{remark} 
 The interpretation of a strongly locally stable equilibrium 
is that the best response (at each $x$) to a small deviation from the equilibrium is to return to an equivalent equilibrium 
immediately; i.e., if a small deviation from a strongly locally stable equilibrium occurs then 
the myopic adjustment process 
converges in one step to an equivalent equilibrium. The interpretation of a locally stable equilibrium is that if a small deviation from the equilibrium occurs then the equilibrium (or more precisely an equivalent equilibrium) will eventually be restored under the myopic adjustment process.  
\end{remark}

 We obtain:

\begin{theorem} \label{strongly-locally-stable-thm} Suppose $g$ is 
strictly 
 convex and that an equilibrium  ${\hat {\textbf{p}}}$  exists. Then,  ${\hat {\textbf{p}}}$ is strongly locally stable.
\end{theorem}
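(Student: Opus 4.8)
The plan is to exploit two consequences of strict convexity of $g$, and then to perturb carefully. First, by Lemma \ref{app-lemma1} the map $q\mapsto K(x,q,\textbf{p})$ is convex for every $x$ and $\textbf{p}$, so its maximum over $[0,1]$ is always attained at an endpoint; moreover $K(x,1,\textbf{p})=f(x)+g(h(x))$ does not depend on $\textbf{p}$ at all. Writing $a(x):=f(x)+g(h(x))$ and $b(x,\textbf{p}):=K(x,0,\textbf{p})=\mathbb{E}_x(\phi_{\textbf{p}}(X_1))+g(\mathbb{E}_x(\psi_{\textbf{p}}(X_1)))$, the set of maximizers of $q\mapsto K(x,q,\textbf{p})$ is $\{1\}$ if $a(x)>b(x,\textbf{p})$, is $\{0\}$ if $a(x)<b(x,\textbf{p})$, and contains $\{0,1\}$ if $a(x)=b(x,\textbf{p})$; and $\textbf{p}\mapsto b(x,\textbf{p})$ is continuous by Lemma \ref{lem1} and Assumption \ref{g-assum}. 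By Theorem \ref{cor:NecSufCondConvexTWO} I may assume $\hat{\textbf{p}}$ is pure, replacing it by an equivalent pure equilibrium; the construction there leaves $\phi_{\hat{\textbf{p}}}$ and $\psi_{\hat{\textbf{p}}}$, hence $b(\cdot,\hat{\textbf{p}})$, unchanged.

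Next I would partition $E=S\cup C\cup I$ according to whether $a(x)>b(x,\hat{\textbf{p}})$, $a(x)<b(x,\hat{\textbf{p}})$, or $a(x)=b(x,\hat{\textbf{p}})$; then necessarily $\hat{p}_x=1$ on $S$ and $\hat{p}_x=0$ on $C$. Since $E$ is finite and each $b(x,\cdot)$ is continuous, there is $\epsilon>0$ such that for every $\hat{\textbf{p}}^{\epsilon}\in B(\hat{\textbf{p}};\epsilon)$ one still has $a(x)>b(x,\hat{\textbf{p}}^{\epsilon})$ on $S$ and $a(x)<b(x,\hat{\textbf{p}}^{\epsilon})$ on $C$. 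Given such a $\hat{\textbf{p}}^{\epsilon}$, define $\textbf{p}$ by letting it agree with $\hat{\textbf{p}}$ on $S\cup C$ and, on $I$, by $p_x=1$ if $a(x)\ge b(x,\hat{\textbf{p}}^{\epsilon})$ and $p_x=0$ otherwise. By the endpoint dichotomy above, $p_x\in\arg\max_{q\in[0,1]}K(x,q,\hat{\textbf{p}}^{\epsilon})$ for every $x$, that is $K(x,p_x,\hat{\textbf{p}}^{\epsilon})\ge K(x,q,\hat{\textbf{p}}^{\epsilon})$ for all $q\in[0,1]$ and all $x$, which is exactly the inequality demanded by the definition of strong local stability. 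It remains only to show $\textbf{p}\in\hat{\textbf{P}}$.

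This last verification is the heart of the argument, and the step I expect to be the main obstacle. Since $\textbf{p}$ differs from $\hat{\textbf{p}}$ only on the indifferent set $I$, one has to show that toggling the stopping decision at states of $I$ produces an equilibrium with the same value function. For a component $y\in I$ with $\hat{p}_y\in(0,1)$ this is built in: the convex function $q\mapsto K(y,q,\hat{\textbf{p}})$ has an interior maximum and is therefore constant, so strict convexity of $g$ together with linearity of $q\mapsto qf(y)+(1-q)\mathbb{E}_y(\phi_{\hat{\textbf{p}}}(X_1))$ forces $h(y)=\mathbb{E}_y(\psi_{\hat{\textbf{p}}}(X_1))$ and hence $f(y)=\mathbb{E}_y(\phi_{\hat{\textbf{p}}}(X_1))$; by the strong-Markov computation already carried out in the proof of Theorem \ref{cor:NecSufCondConvexTWO}, switching $p_y$ between $0$ and $1$ then leaves $\phi_{\textbf{p}}$, $\psi_{\textbf{p}}$, and therefore $J_{\textbf{p}}$ and all the numbers $b(\cdot,\textbf{p})$, unchanged, and Proposition \ref{equivalent-eq-def} certifies that $\textbf{p}$ is an equilibrium in $\hat{\textbf{P}}$. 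The delicate case is a component $y\in I$ at which $\hat{p}_y$ is pure but $q\mapsto K(y,q,\hat{\textbf{p}})$ is only maximized at the two endpoints rather than constant; here one must argue directly that reversing the decision at $y$ perturbs neither the equilibrium inequalities nor the value at any state, using the absorbing structure from Assumption \ref{absorb-assum} to control how the change propagates through the chain. Once this is in place, combining it over the finitely many states of $I$ finishes the proof.
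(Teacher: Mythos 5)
Your overall strategy is the same as the paper's (endpoint maximization from Lemma \ref{app-lemma1}, continuity of $\textbf{p}\mapsto K(x,\cdot,\textbf{p})$ from Lemma \ref{lem1}, persistence of strict preferences on $S\cup C$ for small $\epsilon$, and the computation from the proof of Theorem \ref{cor:NecSufCondConvexTWO} to handle indifferent states), but the proposal is not a complete proof: the step you yourself call ``the main obstacle'' --- verifying $\textbf{p}\in\hat{\textbf{P}}$ when some $y\in I$ satisfies $a(y)=b(y,\hat{\textbf{p}})$ while $q\mapsto K(y,q,\hat{\textbf{p}})$ is maximized only at the two endpoints --- is never carried out, and that is exactly where the work lies. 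In that subcase strict convexity of $g$ means $h(y)\neq\mathbb{E}_y(\psi_{\hat{\textbf{p}}}(X_1))$, so flipping the decision at $y$ leaves the combination $\phi+g(\psi)$ at $y$ unchanged but changes $\phi_{\textbf{p}}(y)$ and $\psi_{\textbf{p}}(y)$ individually; states that can reach $y$ feel $\mathbb{E}(\phi_{\textbf{p}}(X_1))$ and $g(\mathbb{E}(\psi_{\textbf{p}}(X_1)))$ separately, so both their values and the inequalities \eqref{I}, \eqref{II} there can be destroyed. ``Using the absorbing structure to control how the change propagates'' is a hope, not an argument, and no induction over the finitely many states of $I$ can start until it is supplied.

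For comparison, the paper's proof applies $\bar\Gamma$ to $\hat{\textbf{p}}^{\epsilon}$ and distinguishes, for each $x$, the cases $\arg\max_{q\in[0,1]}K(x,q,\hat{\textbf{p}})=\{0\}$, $=\{1\}$, $=[0,1]$: in the first two, continuity pins the best response to $\hat{p}_x$ for small $\epsilon$, and in the third, constancy of $q\mapsto K(x,q,\hat{\textbf{p}})$ together with strict convexity forces $f(x)=\mathbb{E}_x(\phi_{\hat{\textbf{p}}}(X_1))$ and $h(x)=\mathbb{E}_x(\psi_{\hat{\textbf{p}}}(X_1))$, so that, by the strong-Markov computation of Theorem \ref{cor:NecSufCondConvexTWO}, flipping such coordinates changes neither $\phi_{\hat{\textbf{p}}}$ nor $\psi_{\hat{\textbf{p}}}$ and hence nothing propagates. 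In other words, the paper's case analysis treats every indifferent state as one at which $K(x,\cdot,\hat{\textbf{p}})$ is constant in $q$; your delicate subcase (a tie at the two endpoints with non-constant $K$) lies outside that trichotomy, and to close your proof you must either show this subcase cannot occur at an equilibrium or produce the propagation argument you only sketch. Your preliminary reduction to a pure $\hat{\textbf{p}}$ is essentially harmless (the pure equivalent has the same $\phi$, $\psi$, value, and sets $S,C,I$), provided you keep the $\epsilon$-ball centered at the original, possibly mixed, $\hat{\textbf{p}}$, which your use of continuity of $b(x,\cdot)$ in fact allows.
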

\begin{proof} In this proof let us use the notation 
\[{\tilde{\textbf{p}}} = {{\bar \Gamma}}({{\hat {\textbf{p}}}^{\epsilon}}).\]
Now, if we can show that ${\tilde{\textbf{p}}}$ is an equilibrium equivalent to  ${\hat {\textbf{p}}}$ for some small $\epsilon>0$ then we are done. 

Fix an arbitrary state $x\in E$. Since $q \mapsto K(x,q,{\hat{\textbf{p}}})$ is a convex function it follows that exactly one of the following cases holds:  

\emph{Case $1$: $\mbox{\normalfont arg}\hspace{-1mm}\max_{q \in [0,1]} K(x,q,{\hat {\textbf{p}}})=\{0\}$.} 
This means that ${\hat {p}}_x=0$ and 
$K(x,0,{\hat {\textbf{p}}})>K(x,q,{\hat {\textbf{p}}})$ for all $q\in(0,1]$. 
Now use that $K(x,q,{\textbf{p}})$ is continuous in ${\textbf{p}}$ (Lemma \ref{lem1}), and convex in $q$ (Lemma \ref{app-lemma1}) to see that there exists an $\epsilon=\epsilon_x>0$ such that 
\[
\mbox{\normalfont arg}\hspace{-1mm}\max_{q \in [0,1]} K(x,q,{{\hat {\textbf{p}}}^{\epsilon}})=\{0\},\]
i.e. ${\tilde{p}}_x={\hat{p}}_x$. 

\emph{Case $2$: $\mbox{\normalfont arg}\hspace{-1mm}\max_{q \in [0,1]} K(x,q,{\hat {\textbf{p}}})=\{1\}$.} 
In the same way as above we find $\epsilon=\epsilon_x>0$ such that
${\tilde{p}}_x= {\hat{p}}_x$. 

\emph{Case $3$: $\mbox{\normalfont arg}\hspace{-1mm}\max_{q \in [0,1]} K(x,q,{\hat {\textbf{p}}})=[0,1]$.}
This means that $K(x,q_1,{\hat {\textbf{p}}})=K(x,q_2,{\hat {\textbf{p}}})$ for all $q_1,q_2\in[0,1]$. No matter how $\epsilon$ is chosen, convexity trivially implies that
\[{\tilde{p}}_x \in \{0,1\}.\]

Summarizing the three cases, for a sufficiently small $\epsilon$ we conclude that ${\tilde{\textbf{p}}}$ satisfies 
\begin{gather*} 
{\tilde{p}}_x=  
\begin{cases}
	{\hat{p}}_x,						 & 
	\mbox{when } \mbox{\normalfont arg}\hspace{-1mm}\max_{q \in [0,1]} K(x,q,{\hat {\textbf{p}}})=\{0\}\mbox{ or }=\{1\},\\
	1 \mbox{ or } 0,															 & 
	\mbox{when } \mbox{\normalfont arg}\hspace{-1mm}\max_{q \in [0,1]} K(x,q,{\hat {\textbf{p}}})=[0,1].\\
	\end{cases} \label{mean-var-Pr}
\end{gather*}
We now show that ${\tilde{\textbf{p}}}$ is an equilibrium equivalent to  ${\hat {\textbf{p}}}$. Indeed, 
note that the condition $\mbox{\normalfont arg}\hspace{-1mm}\max_{q \in [0,1]} K(x,q,{\hat {\textbf{p}}})=[0,1]$ means that $q\mapsto K(x,q,{\hat {\textbf{p}}})$ is a constant function. Hence, using the exact same arguments as in the proof of Theorem \ref{cor:NecSufCondConvexTWO}, we see that $J_{\tilde{\textbf{p}}} = J_{\hat {\textbf{p}}}$, proving the claim.
\end{proof} 
Theorem \ref{strongly-locally-stable-thm} implies that the equilibria in the mean-variance problems studied in Section \ref{mean-var-prob} below are locally stable.

\begin{definition} An equilibrium ${\hat {\textbf{p}}}$ is said to be \emph{globally stable} if the myopic adjustment process converges to an equivalent equilibrium $\textbf{p} \in {\hat {\textbf{P}}}$ for any starting value ${{\textbf{p}}}_0$.
\end{definition}

Obviously, a globally stable equilibrium is a unique equilibrium and a globally stable equilibrium is also locally stable. However, a globally stable equilibrium is not generally strongly locally stable. 

A strictly convex function $g$ makes the problem of checking strong stability a finite problem as only pure stopping strategies have to be considered. With this notion, we may analyze the structure using the notion of directed graphs: The vertices are the pure strategies $\textbf{p}\in\{0,1\}^N$ and there is a directed edge from $\textbf{p}$ to $\textbf{q}$ if 
$	\textbf{q}= {{\bar \Gamma}}({\textbf{p}}).$ Now, the problem of studying strong stability boils down to checking whether this directed graph is acyclic.

We remark that Example \ref{ex-2-eq} is a problem with a{
%\color{blue} 
strictly} convex $g$ and two equilibria and hence{
%\color{blue} 
strict} convexity of $g$ is not a sufficient condition for global stability.

We immediately obtain the following (trivial) result:

\begin{theorem} 
Suppose $g$ is strictly convex. Then: Either the myopic procedure does not converge but runs in cycles or it terminates in at most 
	$2^N$%$2^N+1$ 
	 steps. 
\end{theorem}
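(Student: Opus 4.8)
The plan is to exploit the fact that, under strict convexity of $g$, the selection map $\bar\Gamma$ always returns a \emph{pure} strategy, so that after the very first step the myopic adjustment process \eqref{myopic-adj-proc} lives on the finite set $\{0,1\}^N$ and is simply the forward orbit of one deterministic self-map of that set; the dichotomy between convergence and cycling then reduces to an elementary fact about iterating a function on a finite set.

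First I would record that for strictly convex $g$ and any $\textbf{p}\in[0,1]^N$, $x\in E$, the function $q\mapsto K(x,q,\textbf{p})$ is convex by Lemma \ref{app-lemma1}; hence its maximum over $[0,1]$ is attained at $q=0$, at $q=1$, or (when the function is constant) on all of $[0,1]$. In every case the \emph{largest} maximizer lies in $\{0,1\}$, so that $\bar\Gamma(\textbf{p})\in\{0,1\}^N$ by Definition \ref{Gamma-p-defNEWa}. Applying this to \eqref{myopic-adj-proc} gives $\textbf{p}_k\in\{0,1\}^N$ for every $k\ge 1$, regardless of the (possibly mixed) starting point $\textbf{p}_0$.

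Next I would observe that $\bar\Gamma$ restricted to $\{0,1\}^N$ is a well-defined map of a finite set with $2^N$ elements into itself, and that the tail $\textbf{p}_1,\textbf{p}_2,\dots$ of the process is obtained by iterating this map from $\textbf{p}_1$. By the pigeonhole principle, among $\textbf{p}_1,\dots,\textbf{p}_{2^N+1}$ two entries coincide, say $\textbf{p}_i=\textbf{p}_j$ with $i<j\le 2^N+1$; since $\bar\Gamma$ is a function, $\textbf{p}_{i+t}=\textbf{p}_{j+t}$ for all $t\ge 0$, i.e. the process is eventually periodic with period dividing $j-i$. If this eventual period equals $1$, the process is eventually constant, hence converges to a fixed point of $\bar\Gamma$, which is an equilibrium by Proposition \ref{FP-thm1}; moreover in this case the values $\textbf{p}_1,\dots,\textbf{p}_m$ up to the first repetition $\textbf{p}_m=\textbf{p}_{m+1}$ are pairwise distinct (an equality $\textbf{p}_i=\textbf{p}_j$ with $i<j\le m$ would, by the same argument, force the orbit to be constant from index $i$, contradicting the minimality of $m$), so $m\le 2^N$ and the procedure terminates in at most $2^N$ steps. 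If instead the eventual period is at least $2$, the process assumes at least two distinct values infinitely often and therefore does not converge but runs in a cycle. These two cases are exhaustive, which is the claim.

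As for difficulty: there is essentially no obstacle — the statement is, as already noted, trivial. The only point requiring a one-line argument is that $\bar\Gamma$ maps into $\{0,1\}^N$ under strict convexity, and this is already implicit in the proofs of Theorems \ref{cor:NecSufCondConvexTWO} and \ref{strongly-locally-stable-thm}; everything after that is the standard observation that the orbit of a map on a finite set is eventually periodic, with the pre-period bounded by the cardinality of the set.
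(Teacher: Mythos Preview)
Your argument is correct and is precisely the one the paper has in mind: the paper does not give a formal proof but simply labels the result ``trivial'' after noting that, for strictly convex $g$, $\bar\Gamma$ takes values in $\{0,1\}^N$ and the myopic process is therefore the forward orbit of a self-map on a set with $2^N$ vertices (phrased there in terms of the associated directed graph). Your pigeonhole/eventual-periodicity write-up is exactly the elementary fact that underlies that remark.
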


\section{Applications} \label{examples}
In this section we apply the developed theory to mean-variance and variance optimization problems.

\subsection{A mean-variance problem} \label{mean-var-prob}
The mean-variance stopping problem --- see Section \ref{rel-lit} for a motivation --- is attained in the framework of the present paper when
\begin{align}
f(x)= -\gamma x^2, \enskip g(x)= x +\gamma x^2 \mbox{ and } h(x)=x, \mbox{ with $\gamma>0$.}
\label{mean-var-f-h-g}
\end{align}
The{
%\color{blue}
strict} convexity of $g$ implies that \emph{if} an equilibrium exists then a pure version of that equilibrium exists, cf. Theorem \ref{cor:NecSufCondConvexTWO}, and that it is moreover strongly locally stable, cf. Theorem \ref{strongly-locally-stable-thm}. 
\subsubsection{Equilibrium strategies of threshold-type}\label{subsubsec:threshold}
In \cite[Section 4.2]{christensen2018time} it was shown that the equilibrium for the mean-variance problem for a geometric Brownian motion corresponds, in case it exists, to using a particular threshold stopping strategy.  
In this section we first study a threshold strategy ansatz to finding an equilibrium stopping time for the mean-variance problem assuming only that $X$ is a 
skip free Markov chain 
absorbed in $x_1$ and $x_N$ on some state space  $E = \{x_1,x_2,...,x_N\}$ 
 with $\mathbb{P}_{x_i}(X_1=x_{i+1})=1/2=\mathbb{P}_{x_i}(X_1=x_{i-1})$ for all $i=2,...,N-1$. We furthermore assume 
that %added word 
 $x_1=0$ (this makes some expressions shorter and can be easily relaxed). Second, we use this ansatz to a more particular problem. 

An (upper) threshold stopping time
\begin{align}\notag
\tau_{\textbf{p}}= \min\{n\geq 0: X_n \geq x_b\},
\end{align}
is easily seen to be attained by the stopping strategy
\begin{align}
\label{threshold-p}
\begin{split}
& \textbf{p}= \left(1,p_{x_2},...,p_{x_{N-1}},1\right)^T, \mbox{ with $p_{x_i}=0$ for $i\in \{2,...,b-1\}$  and}\\
& \mbox{$p_{x_i}=1$  for $i\in \{b,...,N-1\}$.} 
\end{split}
\end{align}
(Note that the values of $p_{x_1}$ and $p_{x_N}$ are irrelevant since $x_1$ and $x_N$ are absorbing states.) 
The convexity of $g$ and Corollary \ref{cor:NecSufCondConvex} imply that ${\textbf{p}}$ in \eqref{threshold-p} is an equilibrium if and only if, for all $i\in \{1,...,N\}$,
\begin{align}	
\notag\phi_{\textbf{p}}(x_i) + g\left(\psi_{\textbf{p}}({x_i})\right)  & \geq f({x_i}) + g(h({x_i})),\\
\notag\phi_{\textbf{p}}(x_i) + g\left(\psi_{\textbf{p}}({x_i})\right)  & \geq  
\mathbb{E}_{x_i}\left(   \phi_{\textbf{p}}(X_1)  \right) + g\left(\mathbb{E}_{x_i}\left( \psi_{\textbf{p}}(X_1) \right)\right).
\end{align}
To see this note e.g. that ${\textbf{p}}$ being pure implies that one of these conditions necessarily holds.
This implies that ${\textbf{p}}$ in \eqref{threshold-p} is an equilibrium if and only: 
\begin{align}	
 & \mathbb{E}_{x_i}\left(   \phi_{\textbf{p}}(X_1)  \right) + g\left(\mathbb{E}_{x_i}\left( \psi_{\textbf{p}}(X_1) \right)\right)\geq f({x_i}) + g(h({x_i})), 
\mbox{ for $i \in \{2,...,b-1\}$}, \label{mean-var-eqcond1}\\
& f({x_i}) + g(h({x_i}))  \geq  \mathbb{E}_{x_i}\left(   \phi_{\textbf{p}}(X_1)  \right) + g\left(\mathbb{E}_{x_i}\left( \psi_{\textbf{p}}(X_1)\right)\right), 
\mbox{ for $i \in \{b,...,N-1\}$}. \label{mean-var-eqcond2}
\end{align} 
To see this use the threshold structure of ${\textbf{p}}$ and that $x_1$ and $x_N$ are absorbing. 
Now consider the function
\begin{align}
H({x_i},b):= \mathbb{E}_{x_i}\left(   \phi_{\textbf{p}}(X_1)  \right) + g\left(\mathbb{E}_{x_i}\left( \psi_{\textbf{p}}(X_1) \right)\right) 
- f({x_i}) - g(h({x_i})).\notag
\end{align}
Since ${\textbf{p}}$ in \eqref{threshold-p} is an equilibrium if and only if \eqref{mean-var-eqcond1} and \eqref{mean-var-eqcond2} hold it follows that
\begin{align}
\begin{split}
& \mbox{${\textbf{p}}$ in \eqref{threshold-p} is an equilibrium if and only if:}\\
& H({x_i},b) \geq 0, \mbox{ for $i \in \{2,...,b-1\}$ and } 
  H({x_i},b) \leq 0, \mbox{ for $i \in \{b,...,N-1\}$}. \label{mean-var-eqcond-H}
\end{split}
\end{align}
Since $x_1$ is absorbing it holds that
\begin{gather*} 
\phi_{\textbf{p}}(x_i) =  
\begin{cases}
	f(x_1) + (f(x_b)-f(x_1))Pr(i,b),																& i \in \{1,...,{b-1}\}\\
	f(x_i),																													& i \in \{b,...,N\},\\
	\end{cases} \label{mean-var-Pr} 
\end{gather*}
for $Pr(i,b):=\mathbb{P}_{x_i}\left(X_{\min\{n\geq 0:X_n \in\{x_1,x_b\}\}}=x_b\right)$ with $X_0=x_i \in \{x_1,...,x_b\}$; 
where $Pr(i,b)$ is determined by the recurrence relation 
$Pr(i,b) = \frac{1}{2}  Pr(i+1,b)  + \frac{1}{2} Pr(i-1,b)$ for $i \in \{2,...,b-1\}$ 
with boundary conditions $Pr(1,b)=0$ and $Pr(b,b)=1$, yielding
\begin{gather*}
Pr(i,b) =  \frac{i-1}{b-1}.
\end{gather*}
Basic probability calculations now yield

$\mathbb{E}_{x_i}\left(   \phi_{\textbf{p}}(X_1)  \right)$ 
\begin{gather*}
= \begin{cases}
	f(x_1),																												  & i =1\\
	\frac{1}{2} \phi_{\textbf{p}}(x_{i+1}) + \frac{1}{2} \phi_{\textbf{p}}(x_{i-1}),			  & i \in \{2,...,N-1\}\\
	f(x_N)  																												& i =N\\
	\end{cases}\quad \quad \quad \quad \quad \quad \quad \quad \quad \quad  \\
	= \begin{cases}
	f(x_1) + (f(x_b)-f(x_1)) \frac{i-1}{b-1} ,	 				  												& i \in \{1,...,b-1\}\\
	\frac{1}{2}f(x_{b+1}) +\frac{1}{2}(f(x_1) + (f(x_b)-f(x_1)) \frac{b-2}{b-1},	 									& i = b\\
	\frac{1}{2}f(x_{i+1}) + \frac{1}{2}f(x_{i-1}),	 																				& i \in \{b+1,...,N-1\}\\
	f(x_N),  																												& i =N.
	\end{cases} 
		\end{gather*}
Using \eqref{mean-var-f-h-g} and $x_1=0$ this implies that
\begin{gather*}
\mathbb{E}_{x_i}\left(   \phi_{\textbf{p}}(X_1)  \right)  
		= \begin{cases}
	-\gamma x_b^2 \frac{i-1}{b-1},	 				 																&  i \in \{1,...,b-1\}\\
	-\frac{1}{2}\gamma x_{b+1}^2 -\frac{1}{2}\gamma x_b^2 \frac{b-2}{b-1},	 	&  i = b\\
	-\frac{1}{2}\gamma x_{i+1}^2 -\frac{1}{2}\gamma x_{i-1}^2,	 			&  i \in \{b+1,...,N-1\}\\
	-\gamma x_N^2,  																									& i =N.
	\end{cases} 
		\end{gather*}

Analogously,
\begin{gather*}
\mathbb{E}_{x_i}\left(   \psi_{\textbf{p}}(X_1)  \right)
	= \begin{cases}
	x_b \frac{i-1}{b-1},	 				  																			&  i \in \{1,...,b-1\}\\
	\frac{1}{2}x_{b+1} +\frac{1}{2}x_b\frac{b-2}{b-1},	 									&  i = b\\
	\frac{1}{2}x_{i+1} + \frac{1}{2}x_{i-1},	 										&  i \in \{b+1,...,N-1\}\\
	x_N,  																												& i =N.
	\end{cases} 
	\end{gather*}
Note also that $- f({x_i}) - g(h({x_i})) = - x_i$. The observations above yield, with some calculations
	\begin{gather*}
	\resizebox{\hsize}{!}{$
 H(x_i,b)=
 \begin{cases}
	x_b \frac{i-1}{b-1}\left(1-\gamma x_b\left(1-\frac{i-1}{b-1}\right)\right) -x_i,	 				 																&  i \in \{2,...,b-1\}\\
	 \frac{x_{b+1}(1-\gamma x_{b+1})}{2} + 
	 \frac{x_{b}\left(1-\gamma x_{b}\right)}{2}\frac{b-2}{b-1} + 
	\gamma\frac{\left(x_{b+1}+x_b\frac{b-2}{b-1}\right)^2}{4} - x_b,	 	&  i = b\\
\frac{x_{i+1}+x_{i-1}}{2} - \frac{\gamma}{4} (x_{i+1}-x_{i-1})^2 -x_i,	 			&  i \in \{b+1,...,N-1\}.
	\end{cases}
$}
	\end{gather*}
Using this explicit formula we can --- for any $N$, $\gamma$ and further specification of the state space $E$ ---  check if \eqref{mean-var-eqcond-H} is satisfied for some $\hat b \in \{2,...,N-1\}$, in which case this $\hat b$ corresponds to an equilibrium. Moreover, if such a $\hat b$ exists then the observations above imply that the corresponding equilibrium value function is
\begin{gather*}
J_{\hat {\textbf{p}}}(x_i) =  \phi_{\hat {\textbf{p}}}(x_i) + g(\psi_{\hat {\textbf{p}}}(x_i)) =
 \begin{cases}  
	x_1=0,																							& i = 1\\    
 H(x_i,\hat b) +x_i																		& i \in \{2,...,b-1\}\\  
	x_i																						  & i \in \{b,...,N\}, \\
 \end{cases}%
\end{gather*}
where ${\hat {\textbf{p}}}$ denotes the threshold strategy, cf. \eqref{threshold-p}, corresponding to $\hat b$.

Let us now consider a specific example. Suppose $\gamma = 0.07$ and that
\begin{align}
E = \{x_1,x_2,...,x_N\}, \mbox{ with $x_1=0$, $x_{i+1} -x_{i}= i/10$ and $N=18$.} 
\label{state-space-mean-var}
\end{align}
The state space $E$ is depicted in Figure \ref{fig-mean-var2}. For this example we conclude from \eqref{mean-var-eqcond-H} and the second picture in Figure \ref{fig-mean-var2} that the threshold strategy \eqref{threshold-p} with $b=\hat b=16$ is an equilibrium. Figure \ref{fig-mean-var2} also depicts the corresponding equilibrium value function together with the value for the strategy of always stopping immediately.

\begin{figure}
\centering 
\includegraphics{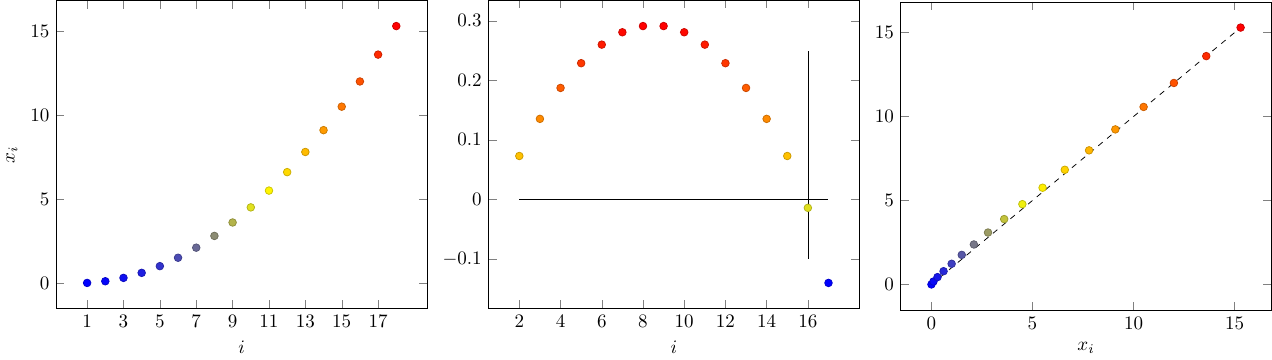}
	\caption{
	First picture: A representation of the state space defined in \eqref{state-space-mean-var} with $N=18$.
	Second picture: $i\mapsto H(x_i,b)$ for $b=16$. %The gray lines indicate $b$ and zero. 
  Third picture: the equilibrium value function $x_i\mapsto J_{\hat {\textbf{p}}}(x_i)$ (dots) and $x_i\mapsto x_i$ (dashed).}
	\label{fig-mean-var2}
	\end{figure}

\begin{remark}
The equilibrium value function in Figure \ref{fig-mean-var2} looks very similar to the equilibrium value function for the geometric Brownian motion mean-variance stopping problem depicted in \cite[Figure 2]{christensen2018time}. 
\end{remark}

\subsubsection{Counterexamples to uniqueness and existence} \label{uniqueness-existence-examples}
In this section we show that one should not in general expect an equilibrium to be unique, not only in the trivial sense that more than one equilibrium strategy may exist, but also in the sense that these may correspond to different equilibrium value functions. 
We also show that one should not in general expect an equilibrium to exist.

\begin{example} [Two different equilibria] \label{ex-2-eq} 
Consider the mean-variance problem --- i.e. $f,g$ and $h$ as defined in \eqref{mean-var-f-h-g} --- for some $\gamma>2$ and the Markov chain $X$ defined in Figure \ref{MC-2}. 
\begin{figure}[h]
\centering 
\includegraphics{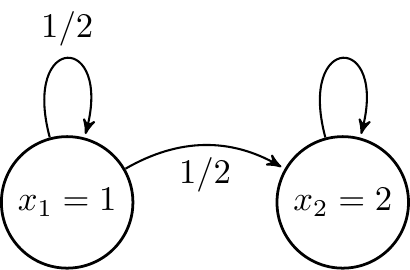}
\caption{The Markov chain $X$ in Example \ref{ex-2-eq}.}\label{MC-2}
\end{figure}

Let us show that $\hat {\textbf{p}}= (1,1)^T$, i.e. the strategy corresponding to always stopping immediately, is an equilibrium. 
Clearly,  
$\phi_{\hat {\textbf{p}}}(x_i) + g(\psi_{\hat {\textbf{p}}}(x_i)) = f(x_i)  + g(h(x_i)) = x_i$ for each $i$. Hence \eqref{I} holds with equality for each $i$. 
Simple calculations give that 
$\mathbb{E}_{x_1}\left(   \phi_{\hat {\textbf{p}}}(X_1)  \right) = \frac{-\gamma }{2}\left(2^2 + 1^2\right) = -\frac{5}{2}\gamma$
and 
$\mathbb{E}_{x_1}\left( \psi_{\hat {\textbf{p}}}(X_1) \right) = \frac{3}{2}$. Hence, 
$$\mathbb{E}_{x_1}\left(   \phi_{\hat {\textbf{p}}}(X_1)  \right) + g\left(\mathbb{E}_{x_1}\left(   \psi_{\hat {\textbf{p}}}(X_1)  \right)\right) = 
\frac{6- \gamma}{4}<1=f(x_1)  + g(h(x_1)).$$ 
Hence, \eqref{II} holds for $i=1$. Moreover, \eqref{II} holds also for $x_2$ since this is an absorbing state. It thus follows from Corollary \ref{cor:NecSufCondConvex} that ${\hat {\textbf{p}}}$ is an equilibrium. The corresponding equilibrium value is easily found to be $J_{\hat {\textbf{p}}}= (1,2)^T$. 
With similar calculations it can be shown that also $\tilde {\textbf{p}}= (0,1)^T$, i.e. waiting until the value $2$ is reached, is an equilibrium  with corresponding equilibrium value function $J_{\tilde {\textbf{p}}}= (2,2)^T$.
\end{example}

\begin{example} [No equilibrium]  \label{ex-0-eq} Consider the mean-variance problem with $\gamma = 1$  for the skip free Markov chain $X$  defined in Figure \ref{MC-3}.
\begin{figure}[h]
\centering 
\includegraphics{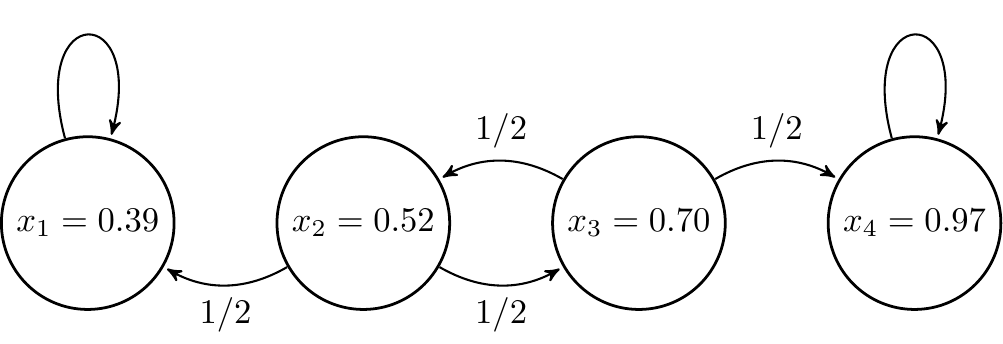}
\caption{The Markov chain $X$ in Example \ref{ex-0-eq}.}\label{MC-3}
\end{figure}

Since $g$ is{
%\color{blue}
strictly} convex  --- implying that if an equilibrium exists then a pure equilibrium exists, cf. Theorem \ref{cor:NecSufCondConvexTWO}
 --- and $x_1$ and $x_N$ are absorbing it follows that verifying that the strategies 
$(i)$--$(iv)$ 
below are not equilibrium strategies corresponds to verifying that this problem has no equilibrium. 
From the calculations below  and the{
%\color{blue}
strict} convexity of $g$ it is easy to see that a myopic adjustment process for this problem does not converge but instead --- regardless of the initial value ${\textbf{p}}_0$ --- runs in a cycle according to 
%\ref{ex-0-eq:i}$ 
$(i) \rightarrow (ii) \rightarrow (iii) \rightarrow (iv) \rightarrow (i)$
%\ref{ex-0-eq:ii}\rightarrow
%\ref{ex-0-eq:iii}\rightarrow
%\ref{ex-0-eq:iv}\rightarrow 
%\ref{ex-0-eq:i}$ 
and so on as long as it is allowed to run.  

\begin{enumerate}%[label=(\roman*)]
\item[\rm(i)] \label{ex-0-eq:i} 
${\textbf{p}}= (1,1,1,1)^T$: 
First, $\phi_{{\textbf{p}}}(x_2) + g(\psi_{{\textbf{p}}}(x_2)) = f(x_2)  + g(h(x_2)) = x_2 = 0.52$. 
Second, 
$\mathbb{E}_{x_2}\left(\phi_{{\textbf{p}}}(X_1)\right) = \frac{1}{2}f(x_1) + \frac{1}{2}f(x_3) \approx -0.3211$ and  
$\mathbb{E}_{x_2}\left(\psi_{{\textbf{p}}}(X_1)\right) =  \frac{1}{2}h(x_1) + \frac{1}{2}h(x_3)  \approx  0.5450$; which gives 
$\mathbb{E}_{x_2}\left(\phi_{{\textbf{p}}}(X_1)\right) + g\left(\mathbb{E}_{x_2}\left(\psi_{{\textbf{p}}}(X_1)\right)\right) 
\approx  0.5210$. 
Hence, deviating by not stopping is optimal at $x=2$ and this is therefore not an equilibrium (cf. e.g. Theorem \ref{lem:nec-cond}).

\item[\rm(ii)]  \label{ex-0-eq:ii}  ${\textbf{p}}= (1,0,1,1)^T$: 
First, $\phi_{{\textbf{p}}}(x_3) + g(\psi_{{\textbf{p}}}(x_3)) = f(x_3)  + g(h(x_3)) = x_3 =  0.70$.
Second, $\mathbb{E}_{x_3}\left(\phi_{{\textbf{p}}}(X_1)\right) 
= \frac{1}{2}f(x_4) + \frac{1}{2}\left(\frac{1}{2}f(x_1) + \frac{1}{2}f(x_3)\right) \approx  -0.6310$ and  
$\mathbb{E}_{x_3}\left(\psi_{{\textbf{p}}}(X_1)\right) 
= \frac{1}{2}h(x_4) + \frac{1}{2}\left(\frac{1}{2}h(x_1) + \frac{1}{2}h(x_3)\right) 
\approx   0.7575$; which gives 
\[\mathbb{E}_{x_3}\left(\phi_{{\textbf{p}}}(X_1)\right)+ g\left(\mathbb{E}_{x_2}\left(\psi_{{\textbf{p}}}(X_1)\right)\right)
\approx   0.7003.\] 
Hence, this is not an equilibrium.

\item[\rm(iii)]   \label{ex-0-eq:iii}  ${\textbf{p}}= (1,0,0,1)^T$: 
First, $\phi_{{\textbf{p}}}(x_2)= Pr(2,4) f(x_4) + (1-Pr(2,4))f(x_1) \approx -0.4150$ 
(where $Pr(2,4)=\frac{1}{3}$, cf. Section \ref{subsubsec:threshold}) and  
\[\psi_{{\textbf{p}}}(x_2) = Pr(2,4) h(x_4) + (1-Pr(2,4))h(x_1) \approx 0.5833.\] 
Hence, $\phi_{{\textbf{p}}}(x_2) + g(\psi_{{\textbf{p}}}(x_2)) \approx    0.5086$. 
Second, $f(x_2)  + g(h(x_2)) = x_2 = 0.52$. Hence, this is not an equilibrium.

\item[\rm(iv)]  \label{ex-0-eq:iv} ${\textbf{p}}= (1,1,0,1)^T$: 
First, $\phi_{{\textbf{p}}}(x_3) =\frac{1}{2}f(x_2)+ \frac{1}{2}f(x_4)\approx  -0.6057$ 
and $\psi_{{\textbf{p}}}(x_3) = \frac{1}{2}h(x_2)+ \frac{1}{2}h(x_4) \approx   0.7450$. 
Hence, $\phi_{{\textbf{p}}}(x_3) + g(\psi_{{\textbf{p}}}(x_3)) \approx 0.6944$. 
Second, $f(x_3)  + g(h(x_3)) = x_3 = 0.70$. Hence, this is not an equilibrium.

\end{enumerate}

\subsection{A variance problem} \label{var-prob}
The variance problem is defined by setting $J_\tau(X_\tau)= \mbox{Var}_x(X_\tau)$ in \eqref{payoff} which in our framework is attained when
\begin{align}
f(x):=  x^2, \enskip  g(x)=- x^2 \mbox{ and } h(x)=x. \label{var-f-h-g}
\end{align}
The equilibrium approach to the variance stopping problem for a geometric Brownian motion was studied in \cite[Section 4.1]{christensen2018time}. Optimal variance problems are also studied in e.g. 
\cite{
buonaguidi2015remark,
buonaguidi2018some,
gad2019optimal,
gad2015variance,
pedersen2011explicit}. 

From Corollary \ref{g-concave-eq-exists-cor} and the concavity of $g$ it follows that an equilibrium always exists for the variance problem. 
Let us now consider a symmetric random walk $X$ on the state space
\begin{align}\notag
E = \{x_0,x_1,...,x_{M-1},x_M\} = \{0,1,...,M-1,M\},
\end{align}
where $x_0$ is absorbing and $x_M$ is reflecting, for some natural number $M$. 
Note that the number of states is $N=M+1$. Let us try the ansatz that the equilibrium is of the kind 
\begin{align}
& \textbf{p}= (1,0,...,0,p)^T \label{strat-var}
\end{align}
for some $p\in [0,1]$ to be determined. Similarly to Section \ref{mean-var-prob} we find that
\begin{align}
Pr(i):=\mathbb{P}_{x_i}\left(X_{\min\{n\geq 0:X_n \in\{x_0,x_M\}\}}=x_M\right) = \frac{i}{M}.\notag
\end{align}
Using that $p$ is the probability of stopping at $x_M$, and also 
\eqref{var-f-h-g}, \eqref{strat-var} and that $x_0=0$ is absorbing, we find  
\begin{align}
\phi_{\textbf{p}}\left(x_M\right)= 
p M^2 + (1-p) Pr\left(M-1\right)\phi_{\textbf{p}}\left(x_M\right).\notag
\end{align}
This implies that
\begin{align}
\notag \phi_{\textbf{p}}\left(x_M\right) 
& = \frac{ p M^2}{1- (1-p) Pr\left(M-1 \right)}\\
\notag & = \frac{ p M^3}{M- (1-p) (M-1)}.
\end{align}
Since $x_M$ is reflecting it follows that
\begin{align}\notag
\mathbb{E}_{x_M}\left(   \phi_{\textbf{p}}(X_1)  \right) = \phi_{\textbf{p}}\left(x_{M-1}\right).
\end{align}
It is similarly found that 
\begin{align}
\notag\phi_{\textbf{p}}(x_i) &= Pr(i)\phi_{\textbf{p}}\left(x_M\right), \mbox{ for all $i$,}\\
\notag\mathbb{E}_{x_i}\left(   \phi_{\textbf{p}}(X_1)  \right) &= \phi_{\textbf{p}}(x_i), \mbox{ for all $i\neq M$.}
\end{align}
Putting everything together yields
\begin{align}\notag
\phi_{\textbf{p}}(x_i) 
= \frac{ i p M^2}{M- (1-p) (M-1)}, \mbox{ for all $i$,}
\end{align}
\begin{gather*}
\mathbb{E}_{x_i}\left(   \phi_{\textbf{p}}(X_1)  \right)
		= \begin{cases}
  \frac{ i p M^2}{M- (1-p) (M-1)}, & i \neq M\\
	\frac{ (M-1)p M^2}{M- (1-p) (M-1)}&  i = M.
\end{cases}
	\end{gather*}
Similar calculations yield
 \begin{align}
\notag\psi_{\textbf{p}}(x_i) 
& = \frac{ i p M}{M- (1-p) (M-1)}
, \mbox{ for all $i$,}
\end{align}
\begin{gather*}
\mathbb{E}_{x_i}\left(\psi_{\textbf{p}}(X_1)  \right)
		= \begin{cases}
  \frac{ i p M}{M- (1-p) (M-1)}, & i \neq M\\
	\frac{ (M-1)p M}{M- (1-p) (M-1)}&  i = M.
	\end{cases}
	\end{gather*}
Using the findings above it is easy to verify:  
(i) condition \eqref{I} is satisfied for all $i$ and all $p$ (note that this corresponds to the fact that the variance is always non-negative), 
(ii) condition \eqref{II} holds with equality for $i\neq M$,  
(iii) condition \eqref{III} holds with equality for $i=M$ if and only if $p=\frac{1}{M+1}$ (the inequality in condition \eqref{III} is of course trivially satisfied), and 
(iv) condition \eqref{II}  holds for $p=\frac{1}{M+1}$ and $i = M$. 
Hence, Corollary \ref{cor:NecSufCondConcave} implies that the strategy 
\begin{align}
{\hat {\textbf{p}}}= \left(1,0,...,0,\frac{1}{M+1}\right)^T, \label{p-var-eq-strat}
\end{align}
is an equilibrium. Simple calculations imply that \eqref{p-var-eq-strat} corresponds to
\begin{align}\notag
\phi_{\hat {\textbf{p}}}(x_i) = \frac{ i M}{2}, \mbox{ for all $i$,}
\end{align}
\begin{gather*}
\mathbb{E}_{x_i}\left(   \phi_{\hat {\textbf{p}}}(X_1)  \right)
		= \begin{cases}
	 \frac{ i M}{2}, & i \neq M\\
		\frac{   (M-1)M}{2}&  i = M,
	\end{cases}
	\end{gather*}
\begin{align}\notag
\psi_{\hat {\textbf{p}}}(x_i) = \frac{ i }{2}, \mbox{ for all $i$,}
\end{align}
\begin{gather*} 
\mathbb{E}_{x_i}\left(   \psi_{\hat {\textbf{p}}}(X_1)  \right)
		= \begin{cases}
	 \frac{ i }{2}, & i \neq M\\
		\frac{  M-1}{2}&  i = M.
	\end{cases}
	\end{gather*}	 
The corresponding equilibrium value function is
\begin{align}\notag
J_{\hat {\textbf{p}}}(x_i) = \frac{ i M}{2} - \left(\frac{ i}{2}\right)^2, \mbox{ for all $i$.}
\end{align}
The equilibrium value function in the case $M=100$ is depicted in Figure \ref{fig-var1}. 

{We remark that for mixed equilibria one should not in general expect equilibrium stability. This can be made rigorous in this example: If we consider ${\hat {\textbf{p}}}^{\epsilon}$ by just changing ${\hat {p}_M}$ to ${\hat {p}}^{\epsilon}_M=\frac{1}{M+1}+\epsilon$, the myopic adjustment process can be found explicitly using the calculations above. Indeed, for small enough $\epsilon$, it holds that}
	\[{\bar \Gamma}({\hat {\textbf{p}}}^{\epsilon})_x=\begin{cases}
	\hat {p}_x,&x\not=M,\\
	\hat {p}_M-\frac{(M-1)}{2}\epsilon,&x=M.
	\end{cases}\]
	Therefore, in case $M\geq 3$, iterating this, we see that the myopic adjustment process cannot converge to the fixed point ${\hat {\textbf{p}}}${
	%\color{blue}
	and it does in fact not converge at all (at least when $M=3$).} 
 Hence, $\hat {\textbf{p}}$ is not locally stable.\begin{figure} 
\centering 
\includegraphics{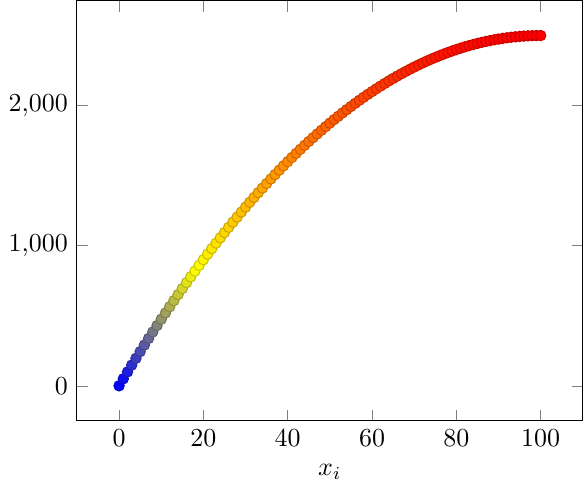}
								\caption{The equilibrium value function $x_i\mapsto J_{\hat {\textbf{p}}}(x_i)$ for $M=100$.}\label{fig-var1}
\end{figure}{
%\color{blue}
 The observation that (local) stability does not hold here is in line with other findings in the literature on games. Indeed, mixed equilibria are often found to have an unstable behavior. The next easy example, however, shows that this is not always the case:}

\begin{example} [A globally stable mixed equilibrium] \label{newExampleglob}{
%\color{blue}
 Consider the variance problem for the Markov chain defined in Figure \ref{MC-4}.
\begin{figure}[h]
\centering 
\includegraphics{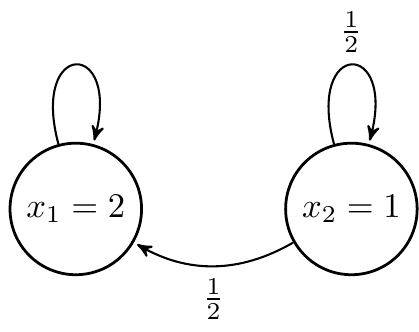}
\caption{The Markov chain $X$ in Example \ref{newExampleglob}.}\label{MC-4}
\end{figure}
 Similarly to the variance problem above it is easily verified that $\hat {\textbf{p}}=(1,1/3)$ is the only equilibrium and furthermore
	\[{\bar \Gamma}(p_1,p_2)=\left(1,\frac{1-p_2}{2}\right)\]
which is obviously a contraction with fixed point $(1,1/3)$, so that the equilibrium is globally stable.}
\end{example}

\section{Discussion and relation to the literature}\label{equilibrium-discussion}

The definitions of pure and mixed strategies as well as the equilibrium definition of the present paper are in line with the definitions of \cite{bayraktar2018} which studies mean-standard deviation and mean-variance stopping in a discrete time framework. 
A pure stopping strategy for continous time is in \cite{christensen2017finding,christensen2018time} defined as the entry time into a set in the state space. 
The stopping strategies considered in \cite{huang2018time,huang2019general,huang2019optimal,huang2017optimalDISC} are of the same type. 
Noticing that a pure stopping strategy ${\textbf{p}}$  corresponds to $$\tau_{\textbf{p}} = \min\{n\geq 0:X_n \in\{x \in E:{\textbf{p}}_x=1\} \},$$   
we see that our definition is in line with the literature. 
A mixed stopping strategy for continuous time is in \cite{christensen2018time} defined as the first stopping time of an $X$-associated Cox process, which is a natural continous time interpretation of the definition of a mixed stopping strategy in the present paper; see \cite[Section 2.1]{christensen2018time} for further arguments. 
The continuous time mixed equilibrium in \cite{christensen2018time} corresponds to a first order condition whose interpretation is 
in line %changed word 
with the present paper in the sense that a stopping strategy is an equilibrium if it is at no $x$ desirable to deviate from the equilibrium by using an alternative probability for stopping at $x$. The equilibrium definition in \cite{christensen2017finding} is analogous but in a framework considering only pure strategies. 
Further comparisons of definitions in the literature on time-inconsistent stopping is found in  e.g. \cite{christensen2017finding,christensen2018time}. 
In \cite{huang2017optimalDISC} a non-exponential discounting stopping problem in discrete time for stopping strategies that correspond to pure strategies in the sense of the present paper is studied and a method for finding equilibria similar to the myopic adjustment process, there called a fixed-point iteration, is used to establish equilibrium existence. 
The equilibrium definition of \cite{huang2017optimalDISC} differs from that of the present paper in the sense that it is not possible to deviate at $x$ from a proposed equilibrium stopping strategy if it 
suggests %changed word 
stopping at $x$, and hence the strategy of always stopping immediately is necessarily an equilibrium, see also the discussion in \cite[Section 2.1]{christensen2018time}. 
Similar frameworks and approaches to finding equilibria for time-inconsistent stopping problems in continuous time are studied in \cite{huang2018time,huang2019general,huang2019optimal}. We also note that a similar iteration approach is used to finding equilibria for a portfolio selection problem in \cite[Example 4.]{delong2018time}.

\appendix       
\section{Technical results}

In this section we derive properties for the function $\phi_\textbf{p}$ defined in \eqref{phi-psi-notation}. Analogous results are of course true for $\psi_\textbf{p}$ defined in \eqref{phi-psi-notation}.

\begin{lemma} \label{phi-p-expressed-with-Exp} For each $\textbf{p}$ and $x$ 
it %changed word 
holds that,
	\begin{align}
	\phi_\textbf{p}(x) & = p_xf(x) + (1-p_x)\mathbb{E}_x\left( \phi_{{ \textbf{p} }}(X_1) \right).\notag
	\end{align}
\end{lemma}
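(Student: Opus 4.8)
The plan is to prove this by a first-step (one-step) analysis, conditioning on whether the chain is stopped at time $0$ or not, which is exactly what the randomization device $Y_0$ governs. Fix a mixed stopping strategy $\textbf{p}$ and a state $x\in E$. First I would recall the definition $\tau_\textbf{p} = \min\{n\geq 0 : Y_n \leq p_{X_n}\}$ and observe that, conditionally on $X_0 = x$, the event $\{\tau_\textbf{p} = 0\} = \{Y_0 \leq p_x\}$ has probability exactly $p_x$, since $Y_0$ is uniform on $[0,1]$ and independent of $X$. On this event, $f(X_{\tau_\textbf{p}}) = f(x)$.

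Next I would handle the complementary event $\{\tau_\textbf{p} \geq 1\} = \{Y_0 > p_x\}$, which has probability $1-p_x$. On this event, $\tau_\textbf{p} = \min\{n\geq 1 : Y_n \leq p_{X_n}\}$, so by the strong time-homogeneous Markov property of $X$ (and the independence of the family $\{Y_n\}$ from $X$ and across indices), the conditional distribution of $f(X_{\tau_\textbf{p}})$ given $X_1$ and given $\{Y_0 > p_x\}$ agrees with the law of $f(X_{\tau_\textbf{p}})$ started afresh from $X_1$; that is, the conditional expectation equals $\phi_\textbf{p}(X_1)$. Here one uses that $\{Y_0 > p_x\}$ is independent of $(X_1, X_2, \dots)$ and of $(Y_1, Y_2, \dots)$, so conditioning on it does not disturb anything. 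Taking expectations and using the tower property over $X_1$ gives $\mathbb{E}_x\big(f(X_{\tau_\textbf{p}}) \mathbf{1}_{\{\tau_\textbf{p}\geq 1\}}\big) = (1-p_x)\,\mathbb{E}_x\big(\phi_\textbf{p}(X_1)\big)$.

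Combining the two pieces yields
\begin{align}\notag
\phi_\textbf{p}(x) = \mathbb{E}_x\big(f(X_{\tau_\textbf{p}})\big) = p_x f(x) + (1-p_x)\,\mathbb{E}_x\big(\phi_\textbf{p}(X_1)\big),
\end{align}
as claimed. The main obstacle, such as it is, is bookkeeping rather than depth: one must be careful that the case $\tau_\textbf{p} = \infty$ is handled consistently — here Assumption \ref{absorb-assum} guarantees the limit defining $f(X_{\tau_\textbf{p}})$ on $\{\tau_\textbf{p} = \infty\}$ exists, and when $x$ is absorbing the identity is trivial since then $\phi_\textbf{p}(x) = f(x) = \mathbb{E}_x(\phi_\textbf{p}(X_1))$ regardless of $p_x$ (as already remarked after \eqref{phi-psi-notation}) — and that the independence structure of the $Y_n$'s is invoked correctly when asserting that the post-time-$0$ behaviour is an independent copy started from $X_1$. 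The analogous identity for $\psi_\textbf{p}$ with $h$ in place of $f$ follows by the identical argument.
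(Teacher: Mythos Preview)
Your proof is correct and takes essentially the same approach as the paper: the paper's proof is the one-line remark ``This follows from the definitions of $\textbf{p}$ and $\tau_{\textbf{p}}$,'' and your first-step analysis conditioning on $\{Y_0\le p_x\}$ versus $\{Y_0>p_x\}$ is exactly the unpacking of that sentence. Your version is simply more explicit about the independence/Markov bookkeeping and the $\tau_{\textbf{p}}=\infty$ case, which the paper leaves to the reader.
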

\begin{proof} 
This follows %changed word 
 from the definitions of $\textbf{p}$ and ${\tau_{\textbf{p}}}$.
\end{proof}

\begin{lemma} \label{lem1} For each $\textbf{p}$ and $x$, the following identities hold:
\begin{align}
\notag &	\phi_{\textbf{p}}(x) = \mathbb{E}_x\left(
	I_{\{\tau_{\textbf{p}} <\infty\}}\sum_{i\in\mathbb{N}_0} \prod_{j=1}^i (1-{p}_{X_{j-1}}){p}_{X_{i}}f(X_i)+
	I_{\{\tau_{\textbf{p}} =\infty\}} \lim_{n\rightarrow\infty}f(X_n)
	\right),\\
\notag &	\mathbb{E}_x\left( \phi_{\textbf{p}}(X_1)\right) = \mathbb{E}_x\left(
		I_{\{\tau_{\textbf{p}} <\infty\}}\sum_{i\in\mathbb{N}} \prod_{j=2}^i (1-{p}_{X_{j-1}}){p}_{X_{i}}f(X_i)+
	I_{\{\tau_{\textbf{p}} =\infty\}} \lim_{n\rightarrow\infty}f(X_n)
	\right),
	\end{align}
	where we use the convention $\prod_{j=k}^l:=1$ for $l<k$. Moreover, the functions
	\begin{align}
	{\textbf{p}} \in [0,1]^N \mapsto  \phi_{\textbf{p}}(x), \;{\textbf{p}} \in [0,1]^N \mapsto  \mathbb{E}_x\left( \phi_{\textbf{p}}(X_1)\right)\notag
	\end{align}
are, for each fixed $x\in E$, continuous.
\end{lemma}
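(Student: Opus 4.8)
The plan is to establish the two series representations first and then deduce continuity from them by dominated convergence. The starting point is Lemma \ref{phi-p-expressed-with-Exp}, or rather the idea behind it: on the event $\{\tau_{\textbf{p}} = i\}$ the chain has refused to stop at times $0,1,\dots,i-1$ and has stopped at time $i$, and conditionally on $X_0,\dots,X_n$ the randomization variables $Y_0,\dots,Y_n$ are independent, so the probability of this pattern (given the path of $X$) is exactly $\prod_{j=1}^{i}(1-p_{X_{j-1}})\,p_{X_i}$. Summing $f(X_i)$ weighted by these probabilities over $i\in\mathbb{N}_0$, and adding the contribution of $\{\tau_{\textbf{p}}=\infty\}$ where by our convention $f(X_{\tau_{\textbf{p}}})=\lim_n f(X_n)$, gives the first identity after taking $\mathbb{E}_x$. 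The second identity is the same computation applied from time $1$ onward: conditioning on $X_1$ and using the Markov property, $\mathbb{E}_x(\phi_{\textbf{p}}(X_1))$ is the expected discounted-by-survival reward where the first possible stopping time is $n=1$, which accounts for the product starting at $j=2$ and the sum ranging over $i\in\mathbb{N}$. I would write both as a single application of the tower property / monotone-or-dominated convergence, being careful with the $\prod_{j=k}^{l}:=1$ convention for $l<k$ so that the $i=0$ term of the first series is just $p_{X_0}f(X_0)=p_x f(x)$.

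For the continuity claim, fix $x\in E$ and a sequence ${}_k\textbf{p}\to\textbf{p}$ in $[0,1]^N$. The $i$-th summand of the series is, pathwise, a polynomial in the finitely many coordinates of $\textbf{p}$, hence continuous in $\textbf{p}$; so termwise convergence as $k\to\infty$ is immediate. To interchange limit and ($\mathbb{E}_x$ and) infinite sum I need a dominating bound uniform in $k$. Here is where Assumption \ref{absorb-assum} does the work: since $E$ is finite and $X$ is absorbing, there is a uniform geometric bound on the probability of not having been absorbed — there exist $c<1$ and $m\in\mathbb{N}$ with $\mathbb{P}_y(\text{not absorbed by time } m)\le c$ for all $y$. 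However, the survival weights $\prod_{j=1}^{i}(1-p_{X_{j-1}})$ can be close to $1$ even near absorbing states (since $p$ at an absorbing state is irrelevant). The clean way around this is to observe that $\sum_{i} \prod_{j=1}^{i}(1-p_{X_{j-1}})\,p_{X_i}\,|f(X_i)| \le (\max_{E}|f|)\cdot \sum_i \prod_{j=1}^i(1-p_{X_{j-1}})p_{X_i} \le \max_E|f|$, because those weights are the probabilities of a partition of the event $\{\tau_{\textbf{p}}<\infty\}$ conditionally on the path and hence sum to at most $1$; together with $|f(X_{\tau_{\textbf{p}}})I_{\{\tau_{\textbf{p}}=\infty\}}|\le \max_E|f|$ this gives an integrable ($X$-measurable, in fact bounded) dominating function that does not depend on $k$ at all. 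Dominated convergence then yields $\phi_{{}_k\textbf{p}}(x)\to\phi_{\textbf{p}}(x)$, and the identical argument (dominating by $\max_E|f|$ again) handles $\mathbb{E}_x(\phi_{{}_k\textbf{p}}(X_1))\to\mathbb{E}_x(\phi_{\textbf{p}}(X_1))$.

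The one genuinely delicate point — the main obstacle — is the careful handling of the event $\{\tau_{\textbf{p}}=\infty\}$ and the pathwise convergence of the $i$-th summand there. On $\{\tau_{\textbf{p}}=\infty\}$ the chain has been absorbed (by Assumption \ref{absorb-assum}) so $\lim_n f(X_n)$ exists and equals $f$ at the absorbing state; this term does not depend on $\textbf{p}$ except through the indicator, and one must check that the indicator's dependence on $\textbf{p}$ is innocuous. In fact it is cleanest to avoid splitting: note that for \emph{any} path and any $\textbf{p}$, $\sum_{i\in\mathbb{N}_0}\prod_{j=1}^{i}(1-p_{X_{j-1}})p_{X_i}f(X_i) + \big(\prod_{j\ge 1}(1-p_{X_{j-1}})\big)\lim_n f(X_n)$ equals the conditional expectation of $f(X_{\tau_{\textbf{p}}})$ given the $X$-path, and the infinite product $\prod_{j\ge1}(1-p_{X_{j-1}})$ coincides a.s. with $I_{\{\tau_{\textbf{p}}=\infty\}}$ once we observe that past an absorption time the factors are $1-p_{\text{abs}}$ which is irrelevant to $\phi$ and, crucially, that absorption happens a.s. — so the product is either eventually-multiplied-by-nothing-new or already $0$. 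Writing the argument in this consolidated form makes both the representation and the continuity fall out of a single bounded convergence argument, and sidesteps the awkwardness of justifying convergence of the indicator $I_{\{\tau_{{}_k\textbf{p}}=\infty\}}$ directly.
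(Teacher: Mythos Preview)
Your argument is correct and follows the paper's own (very brief) proof: Fubini to integrate out the independent $Y$-variables conditionally on the $X$-path for the two identities, then dominated (``majorized'') convergence with the uniform bound $\max_E|f|$ for continuity, invoking along the way that $\phi_{\textbf p}$ is independent of $p_x$ at absorbing states. One wording slip in your last paragraph: the infinite product $\prod_{j\ge1}(1-p_{X_{j-1}})$ equals $\mathbb E\big(I_{\{\tau_{\textbf p}=\infty\}}\mid X\big)$, not $I_{\{\tau_{\textbf p}=\infty\}}$ itself (the latter depends on $Y$), so they do not literally coincide a.s.; but your consolidated expression $\mathbb E\big(f(X_{\tau_{\textbf p}})\mid X\big)$ is exactly the right object for the bounded-convergence step, so this does not affect the argument.
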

\begin{proof} Using the notation \eqref{phi-psi-notation}, Fubini's Theorem and Assumption \ref{absorb-assum} we immediately obtain the identities. Recall that $\phi_\textbf{p}$ is independent of the choice of $p_x$ for each absorbing state $x\in E$. The continuity follows from majorized convergence.
\end{proof}

\begin{lemma} \label{lem-phi} Consider a function $\phi:E\rightarrow \mathbb{R}$ 
and a vector $\textbf{p}\in [0,1]^N$ 
with $p_x>0$ for each absorbing state $x\in E$ and suppose
	\begin{align}
	\phi(x) = {p}_xf(x)  + (1-{p}_x)\mathbb{E}_x\left(\phi(X_1)\right),  \mbox{ for each $x\in E$,}\label{lem1xx}
	\end{align}
	then 
	\begin{align}\notag
	\phi(x) = \phi_{\textbf{p}}(x), \mbox{ for each $x\in E$}.
	\end{align}

\end{lemma}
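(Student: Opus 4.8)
The plan is to show that the function $\phi$ solving the recursion \eqref{lem1xx} must coincide with $\phi_{\textbf{p}}$ by iterating the recursion and passing to the limit, using the absorbing structure of $X$. First I would unfold \eqref{lem1xx}: writing it out and substituting repeatedly, for any $n$ one obtains
\begin{align}\notag
\phi(x) = \mathbb{E}_x\left( \sum_{i=0}^{n-1} \prod_{j=1}^{i}(1-p_{X_{j-1}})\, p_{X_i} f(X_i) \right) + \mathbb{E}_x\left( \prod_{j=1}^{n}(1-p_{X_{j-1}})\, \phi(X_n) \right),
\end{align}
which is checked by a trivial induction on $n$ (the base case $n=1$ is exactly \eqref{lem1xx}, and the inductive step applies \eqref{lem1xx} at $X_n$ inside the conditional expectation, using the Markov property). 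The first sum, as $n\to\infty$, converges by monotone/dominated convergence to the expression for $\phi_{\textbf{p}}(x)$ given in Lemma \ref{lem1} restricted to $\{\tau_{\textbf{p}}<\infty\}$, since $\prod_{j=1}^{i}(1-p_{X_{j-1}})\, p_{X_i}$ is precisely the probability (conditional on the path of $X$) that $\tau_{\textbf{p}}=i$.

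The key step is to handle the remainder term $R_n(x):=\mathbb{E}_x\left( \prod_{j=1}^{n}(1-p_{X_{j-1}})\, \phi(X_n) \right)$. The factor $\prod_{j=1}^{n}(1-p_{X_{j-1}})$ equals $\mathbb{P}(\tau_{\textbf{p}}>n-1 \mid X_0,\dots,X_{n-1})$, i.e. the (path-conditional) probability of not having stopped by time $n-1$. On the event $\{\tau_{\textbf{p}}=\infty\}$ the product converges (decreasingly) to $\mathbb{P}_x(\tau_{\textbf{p}}=\infty\mid X)$; on $\{\tau_{\textbf{p}}<\infty\}$ it eventually hits $0$ because some $p_{X_j}>0$ is encountered. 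The crucial point where the hypothesis $p_x>0$ for each absorbing state $x$ enters: by Assumption \ref{absorb-assum}, from any starting point $X$ reaches an absorbing state $a$ with positive probability in finitely many steps, and once there it stays; since $p_a>0$, the strategy $\textbf{p}$ stops with probability one on the absorbing set. Hence $\mathbb{P}_x(\tau_{\textbf{p}}<\infty)$ need not be one in general, but on $\{\tau_{\textbf{p}}=\infty\}$ we must have $X_n$ eventually trapped in a recurrent (absorbing-state-containing) class where $p>0$ is never visited — a standard argument shows $\{\tau_{\textbf{p}}=\infty\}$ forces $X_n$ to avoid the absorbing states, which for a finite absorbing chain is a null event, so $\tau_{\textbf{p}}<\infty$ a.s. under this extra hypothesis. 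Actually the cleanest route: since $X$ reaches an absorbing state a.s. (finite absorbing chain) and on reaching it gets stopped with probability $1$ because $p>0$ there, we get $\tau_{\textbf{p}}<\infty$ a.s., so $\prod_{j=1}^{n}(1-p_{X_{j-1}})\to 0$ a.s. and $R_n(x)\to 0$ by dominated convergence (the products are bounded by $1$ and $\phi$ is bounded on the finite set $E$).

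Putting these together: letting $n\to\infty$ in the displayed identity gives $\phi(x) = \mathbb{E}_x\left( \sum_{i\in\mathbb{N}_0}\prod_{j=1}^{i}(1-p_{X_{j-1}})p_{X_i}f(X_i) \right) = \phi_{\textbf{p}}(x)$, using the first formula in Lemma \ref{lem1} together with $\mathbb{P}_x(\tau_{\textbf{p}}<\infty)=1$, for each $x\in E$. The main obstacle is the justification that $\tau_{\textbf{p}}<\infty$ a.s. — i.e. correctly exploiting the seemingly minor hypothesis "$p_x>0$ for each absorbing state" in combination with Assumption \ref{absorb-assum} to kill the boundary term; everything else is routine telescoping and a dominated-convergence argument over the finite state space. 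One should also remark that, as noted after \eqref{phi-psi-notation}, $\phi_{\textbf{p}}$ does not depend on the values of $p_x$ at absorbing states, so the hypothesis $p_x>0$ there is harmless for the conclusion and only needed to make the iteration terminate.
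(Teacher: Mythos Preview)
Your proof is correct and follows essentially the same approach as the paper's own proof: the paper also notes that $\tau_{\textbf{p}}<\infty$ a.s. (from the hypothesis $p_x>0$ at absorbing states together with Assumption~\ref{absorb-assum}), then performs repeated substitution in \eqref{lem1xx} via the Markov property to arrive at the series representation, and finally invokes Lemma~\ref{lem1}. Your version simply supplies more of the details (the explicit remainder term $R_n$ and its vanishing by dominated convergence) that the paper leaves implicit.
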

\begin{proof}
Note that $\tau_{\textbf{p}}<\infty$ a.s. Repeated substitution in \eqref{lem1xx} and the Markov property give, with a slight abuse of notation,
\begin{align}
\phi(x) 
\notag & = {p}_xf(x)  + (1-{p}_x)\mathbb{E}_x\left(\phi(X_1)\right)\\
\notag &=\mathbb{E}_x\left(\sum_{i\in\mathbb{N}_0} \prod_{j=1}^i (1-{p}_{X_{j-1}}){p}_{X_{i}}f(X_i)\right).
	\end{align}
	Now use Lemma \ref{lem1}. 
\end{proof}

\begin{lemma}\label{app-lemma1}  If $g:\mathbb{R}\rightarrow\mathbb{R}$	is a convex function then
	\begin{align} 
	q \mapsto (c_1q + c_2(1-q) + g(c_3q + c_4(1-q))\notag
	\end{align}
	is a convex function, for any constants $c_1,...,c_4$. The analogous result holds in the case $g$ is concave.

\end{lemma}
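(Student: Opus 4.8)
The plan is to decompose the map in question into an affine part plus the composition of $g$ with an affine function, and then invoke the elementary facts that affine functions are convex and that convexity is preserved under precomposition with an affine map.

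First I would write $L(q):=c_1q+c_2(1-q)=(c_1-c_2)q+c_2$ and $\ell(q):=c_3q+c_4(1-q)=(c_3-c_4)q+c_4$, so that the function under consideration is $q\mapsto L(q)+g(\ell(q))$. The function $L$ is affine in $q$, hence convex. Next I would check that $g\circ\ell$ is convex: for $q_1,q_2\in\mathbb{R}$ and $\lambda\in[0,1]$, affinity of $\ell$ gives $\ell(\lambda q_1+(1-\lambda)q_2)=\lambda\ell(q_1)+(1-\lambda)\ell(q_2)$, and then convexity of $g$ yields
\begin{align}\notag
g\bigl(\ell(\lambda q_1+(1-\lambda)q_2)\bigr)=g\bigl(\lambda\ell(q_1)+(1-\lambda)\ell(q_2)\bigr)\leq \lambda g(\ell(q_1))+(1-\lambda)g(\ell(q_2)).
\end{align}
Finally, since a sum of two convex functions is convex, $q\mapsto L(q)+g(\ell(q))$ is convex, which is the claim.

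For the concave case I would simply apply the convex case to $-g$, which is convex; since $-g$ convex gives $q\mapsto -c_1q-c_2(1-q)+(-g)(c_3q+c_4(1-q))$ convex, multiplying by $-1$ shows $q\mapsto c_1q+c_2(1-q)+g(c_3q+c_4(1-q))$ is concave. There is essentially no obstacle here; the only point worth isolating is the standard observation that precomposition with the affine map $\ell$ does not destroy convexity, which is exactly where affinity (rather than mere convexity) of $\ell$ is used.
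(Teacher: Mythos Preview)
Your proof is correct and is exactly what the paper has in mind: it simply says the result ``follows directly from the definition of convexity/concavity,'' and your argument spells this out by decomposing into the affine part $L$ and the composition $g\circ\ell$, then verifying the convexity inequality directly.
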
 
\begin{proof} Follows directly from the definition convexity/concavity. 
\end{proof} 
\end{example}


\begin{thebibliography}{HD}

%% Use the widest label as parameter.

%% Arrange the reference items in alphabetical order.

%% Reference items can be numbered or have labels of your choice, as below.

%% Abbreviations of journal names should follow Mathematical Reviews.

%% Only the title is italicized; boldface is not used.

%% Our software will add links to many articles; for this, enclosing volume numbers in { } is helpful
%% Do not give the issue number unless the issues are paginated separately.
 
\bibitem{alia2019non}
I.~Alia.
\newblock \emph{A non-exponential discounting time-inconsistent stochastic optimal
  control problem for jump-diffusion}.
\newblock { Mathematical Control \& Related Fields}, 9(3):541--570, 2019.

\bibitem{balbus2018markov}
L.~Balbus, A.~Ja{\'s}kiewicz, and A.~S. Nowak.
\newblock \emph{Markov perfect equilibria in a dynamic decision model with
  quasi-hyperbolic discounting}.
\newblock { Annals of Operations Research}, pages 1--19, 2018.

\bibitem{bayraktar2019notions}
E.~Bayraktar, J.~Zhang, and Z.~Zhou.
\newblock \emph{On the notions of equilibria for time-inconsistent stopping problems
  in continuous time}.
\newblock {arXiv:1909.01112}, 2019.

\bibitem{bayraktar2018}
E.~Bayraktar, J.~Zhang, and Z.~Zhou.
\newblock \emph{Time consistent stopping for the mean-standard deviation
  problem---the discrete time case}.
\newblock { SIAM Journal on Financial Mathematics}, 10(3):667--697, 2019.

\bibitem{bensoussan2014time}
A.~Bensoussan, K.~Wong, S.~C.~P. Yam, and S.-P. Yung.
\newblock \emph{Time-consistent portfolio selection under short-selling prohibition:
  From discrete to continuous setting}.
\newblock { SIAM Journal on Financial Mathematics}, 5(1):153--190, 2014.

\bibitem{bielecki2005continuous}
T.~R. Bielecki, H.~Jin, S.~R. Pliska, and X.~Y. Zhou.
\newblock \emph{Continuous-time mean-variance portfolio selection with bankruptcy
  prohibition}.
\newblock { Mathematical Finance}, 15(2):213--244, 2005.

\bibitem{tomas-continFORTH}
T.~Bj{\"o}rk, M.~Khapko, and A.~Murgoci.
\newblock \emph{On time-inconsistent stochastic control in continuous time}.
\newblock { Finance and Stochastics}, 21(2):331--360, 2017.

\bibitem{tomas-disc}
T.~Bj{\"o}rk and A.~Murgoci.
\newblock \emph{A theory of {M}arkovian time-inconsistent stochastic control in
  discrete time}.
\newblock { Finance and Stochastics}, 18(3):545--592, 2014.

\bibitem{tomas-mean}
T.~Bj{\"o}rk, A.~Murgoci, and X.~Y. Zhou.
\newblock \emph{Mean-variance portfolio optimization with state-dependent risk
  aversion}.
\newblock { Mathematical Finance}, 24(1):1467--9965, 2014.

\bibitem{borgers1997learning}
T.~B{\"o}rgers and R.~Sarin.
\newblock \emph{Learning through reinforcement and replicator dynamics}.
\newblock { Journal of Economic Theory}, 77(1):1--14, 1997.

\bibitem{buonaguidi2015remark}
B.~Buonaguidi.
\newblock \emph{A remark on optimal variance stopping problems}.
\newblock { Journal of Applied Probability}, 52(4):1187--1194, 2015.

\bibitem{buonaguidi2018some}
B.~Buonaguidi and A.~Mira.
\newblock \emph{Some optimal variance stopping problems revisited with an application
  to the {Italian} {Ftse-Mib} stock index}.
\newblock { Sequential Analysis}, 37(1):90--101, 2018.

\bibitem{christensen2017finding}
S.~Christensen and K.~Lindensj{\"o}.
\newblock \emph{On finding equilibrium stopping times for time-inconsistent
  {M}arkovian problems}.
\newblock {SIAM Journal on Control and Optimization}, 56(6):4228--4255,
  2018.

\bibitem{christensen2019dividend}
S.~Christensen and K.~Lindensj{\"o}.
\newblock \emph{Moment constrained optimal dividends: precommitment \& consistent
  planning}.
\newblock {arXiv:1909.10749}, 2019.

\bibitem{christensen2018time}
S.~Christensen and K.~Lindensj{\"o}.
\newblock \emph{On time-inconsistent stopping problems and mixed strategy stopping
  times}.
\newblock { to appear in Stochastic Processes and their Applications, DOI:
  10.1016/j.spa.2019.08.010}, 2019+.

\bibitem{Czichowsky}
C.~Czichowsky.
\newblock \emph{Time-consistent mean-variance portfolio selection in discrete and
  continuous time}.
\newblock { Finance and Stochastics}, 17(2):227--271, 2013.

\bibitem{delong2018time}
L.~Delong.
\newblock \emph{Time-inconsistent stochastic optimal control problems in insurance
  and finance}.
\newblock { Collegium of Economic Analysis Annals}, (51):229--254, 2018.

\bibitem{delong2015instantaneous}
L.~Delong and A.~Pelsser.
\newblock \emph{Instantaneous mean-variance hedging and sharpe ratio pricing in a
  regime-switching financial model}.
\newblock { Stochastic Models}, 31(1):67--97, 2015.

\bibitem{detemple1992optimal}
J.~B. Detemple and F.~Zapatero.
\newblock \emph{Optimal consumption-portfolio policies with habit formation}.
\newblock { Mathematical Finance}, 2(4):251--274, 1992.

\bibitem{Duraj2017Optimal}
J.~Duraj.
\newblock \emph{Optimal stopping with general risk preferences}.
\newblock { SSRN preprint:2897765}, 2017.

\bibitem{englezos2009utility}
N.~Englezos and I.~Karatzas.
\newblock \emph{Utility maximization with habit formation: Dynamic programming and
  stochastic {PDE}s}.
\newblock { Siam Journal on Control and Optimization}, 48(2):481--520, 2009.

\bibitem{erev1998predicting}
I.~Erev and A.~E. Roth.
\newblock \emph{Predicting how people play games: Reinforcement learning in
  experimental games with unique, mixed strategy equilibria}.
\newblock { American Economic Review}, pages 848--881, 1998.

\bibitem{fan1952fixed}
K.~Fan.
\newblock \emph{Fixed-point and minimax theorems in locally convex topological linear
  spaces}.
\newblock { Proceedings of the National Academy of Sciences},
  38(2):121--126, 1952.

\bibitem{gad2019optimal}
K.~S.~T. Gad and P.~Matom{\"a}ki.
\newblock \emph{Optimal variance stopping with linear diffusions}.
\newblock { to appear in Stochastic Processes and their Applications, DOI:
  10.1016/j.spa.2019.07.001}, 2019+.

\bibitem{gad2015variance}
K.~S.~T. Gad and J.~L. Pedersen.
\newblock \emph{Variance optimal stopping for geometric {L{\'e}vy} processes}.
\newblock { Advances in Applied Probability}, 47(1):128--145, 2015.

\bibitem{he2013optimal}
L.~He and Z.~Liang.
\newblock \emph{Optimal investment strategy for the {DC} plan with the return of
  premiums clauses in a mean--variance framework}.
\newblock { Insurance: Mathematics and Economics}, 53(3):643--649, 2013.

\bibitem{he2019optimal}
X.~D. He, S.~Hu, J.~Ob{\l}{\'o}j, and X.~Y. Zhou.
\newblock \emph{Optimal exit time from casino gambling: Strategies of precommitted
  and naive gamblers}.
\newblock { SIAM Journal on Control and Optimization}, 57(3):1845--1868,
  2019.

\bibitem{huang2018time}
Y.-J. Huang and A.~Nguyen-Huu.
\newblock \emph{Time-consistent stopping under decreasing impatience}.
\newblock { Finance and Stochastics}, 22(1):69--95, 2018.

\bibitem{huang2019general}
Y.-J. Huang, A.~Nguyen-Huu, and X.~Y. Zhou.
\newblock \emph{General stopping behaviors of naive and noncommitted sophisticated
  agents, with application to probability distortion} (forthcoming:
  Doi:10.1111/mafi.12224).
\newblock { Mathematical Finance}.

\bibitem{huang2019optimal}
Y.-J. Huang and X.~Yu.
\newblock \emph{Optimal stopping under model ambiguity: a time-consistent equilibrium
  approach}.
\newblock {arXiv:1906.01232}, 2019.

\bibitem{huang2018strong}
Y.-J. Huang and Z.~Zhou.
\newblock \emph{Strong and weak equilibria for time-inconsistent stochastic control
  in continuous time}.
\newblock {arXiv:1809.09243}, 2018.

\bibitem{huang2017optimal}
Y.-J. Huang and Z.~Zhou.
\newblock \emph{Optimal equilibria for time-inconsistent stopping problems in
  continuous time}.
\newblock {Mathematical Finance. 2019; 1– 32. https://doi.org/10.1111/mafi.12229}
%\newblock {arXiv:1712.07806}, 2019.




\bibitem{huang2017optimalDISC}
Y.-J. Huang and Z.~Zhou.
\newblock \emph{The optimal equilibrium for time-inconsistent stopping problems---the
  discrete-time case}.
\newblock {SIAM Journal on Control and Optimization}, 57(1):590--609, 2019.

\bibitem{kosfeld2002myopic}
M.~Kosfeld, E.~Droste, and M.~Voorneveld.
\newblock \emph{A myopic adjustment process leading to best-reply matching}.
\newblock { Games and Economic Behavior}, 40(2):270--298, 2002.

\bibitem{kronborg2015inconsistent}
M.~T. Kronborg and M.~Steffensen.
\newblock \emph{Inconsistent investment and consumption problems}.
\newblock { Applied Mathematics \& Optimization}, 71(3):473--515, 2015.

\bibitem{landriault2018equilibrium}
D.~Landriault, B.~Li, D.~Li, and V.~R. Young.
\newblock \emph{Equilibrium strategies for the mean-variance investment problem over
  a random horizon}.
\newblock { SIAM Journal on Financial Mathematics}, 9(3):1046--1073, 2018.

\bibitem{li2013optimal}
Y.~Li and Z.~Li.
\newblock \emph{Optimal time-consistent investment and reinsurance strategies for
  mean--variance insurers with state dependent risk aversion}.
\newblock { Insurance: Mathematics and Economics}, 53(1):86--97, 2013.

\bibitem{lindensjo2017timeinconHJB}
K.~Lindensj{\"o}.
\newblock \emph{A regular equilibrium solves the extended {HJB} system}.
\newblock { Operations Research letters}, 47(5):427--432, 2019.



\bibitem{nash1950equilibrium}
J.~Nash.
\newblock \emph{Equilibrium points in n-person games}.
\newblock { Proceedings of the national academy of sciences}, 36(1):48--49, 1950.




\bibitem{nutz2019onditional}
M.~Nutz and Y.~Zhang.
\newblock \emph{Conditional optimal stopping: A time-inconsistent optimization}.
\newblock (to appear in Annals of Applied Probability) {arXiv:1901.05802}, 2019.

\bibitem{pedersen2011explicit}
J.~L. Pedersen.
\newblock \emph{Explicit solutions to some optimal variance stopping problems}.
\newblock { Stochastics An International Journal of Probability and
  Stochastic Processes}, 83(4-6):505--518, 2011.

\bibitem{pedersen2016optimal}
J.~L. Pedersen and G.~Peskir.
\newblock \emph{Optimal mean--variance selling strategies}.
\newblock { Mathematics and Financial Economics}, 10(2):203--220, 2016.

\bibitem{pedersen2013optimal}
J.~L. Pedersen and G.~Peskir.
\newblock \emph{Optimal mean-variance portfolio selection}.
\newblock { Mathematics and Financial Economics}, 11(2):137--160, 2017.

\bibitem{schoneborn2015optimal}
T.~Sch{\"o}neborn.
\newblock \emph{Optimal trade execution for time-inconsistent mean-variance criteria
  and risk functions}.
\newblock { SIAM Journal on Financial Mathematics}, 6(1):1044--1067, 2015.

\bibitem{selten1965spieltheoretische}
R.~Selten.
\newblock \emph{Spieltheoretische behandlung eines oligopolmodells mit
  nachfragetr{\"a}gheit: Teil i: Bestimmung des dynamischen
  preisgleichgewichts}.
\newblock { Zeitschrift f{\"u}r die gesamte Staatswissenschaft/Journal of
  Institutional and Theoretical Economics}, (H. 2):301--324, 1965.

\bibitem{selten1975reexamination}
R.~Selten.
\newblock \emph{Reexamination of the perfectness concept for equilibrium points in
  extensive games}.
\newblock { International journal of game theory}, 4(1):25--55, 1975.

\bibitem{MR2374974}
A.~N. Shiryaev.
\newblock { \emph{Optimal stopping rules}}, volume~8 of { Stochastic Modelling
  and Applied Probability}.
\newblock Springer-Verlag, Berlin, 2008.
\newblock Translated from the 1976 Russian second edition by A. B. Aries,
  Reprint of the 1978 translation.

\bibitem{strotz}
R.~Strotz.
\newblock \emph{Myopia and inconsistency in dynamic utility maximization}.
\newblock { The Review of Economic Studies}, 23(3):165--180, 1955.

\bibitem{tan2018failure}
K.~S. Tan, W.~Wei, and X.~Y. Zhou.
\newblock \emph{Failure of smooth pasting principle and nonexistence of equilibrium
  stopping rules under time-inconsistency}.
\newblock {arXiv:1807.01785}, 2018.

\bibitem{van2018time}
P.~M. Van~Staden, D.-M. Dang, and P.~A. Forsyth.
\newblock \emph{Time-consistent mean--variance portfolio optimization: A numerical
  impulse control approach}.
\newblock { Insurance: Mathematics and Economics}, 83:9--28, 2018.

\bibitem{vigna2014efficiency}
E.~Vigna.
\newblock \emph{On efficiency of mean--variance based portfolio selection in defined
  contribution pension schemes}.
\newblock { Quantitative finance}, 14(2):237--258, 2014.

\bibitem{xu2013optimal}
Z.~Q. Xu, X.~Y. Zhou, et~al.
\newblock \emph{Optimal stopping under probability distortion}.
\newblock { The Annals of Applied Probability}, 23(1):251--282, 2013.

\bibitem{yan2019open}
T.~Yan and H.~Y. Wong.
\newblock \emph{Open-loop equilibrium strategy for mean--variance portfolio problem
  under stochastic volatility}.
\newblock { Automatica}, 107:211--223, 2019.

\bibitem{yan2019time}
W.~Yan and J.~Yong.
\newblock \emph{Time-inconsistent optimal control problems and related issues}.
\newblock In { Modeling, Stochastic Control, Optimization, and
  Applications}, pages 533--569. Springer, 2019.

\bibitem{yu2017optimal}
X.~Yu.
\newblock \emph{Optimal consumption under habit formation in markets with transaction
  costs and random endowments}.
\newblock { The Annals of Applied Probability}, 27(2):960--1002, 2017.

\bibitem{zeng2016robust}
Y.~Zeng, D.~Li, and A.~Gu.
\newblock \emph{Robust equilibrium reinsurance-investment strategy for a
  mean--variance insurer in a model with jumps}.
\newblock { Insurance: Mathematics and Economics}, 66:138--152, 2016.









\end{thebibliography}
\end{document}